\newtheorem{theorem}{Theorem}[section]
\newtheorem{corollary}[theorem]{Corollary}
\newtheorem{lemma}[theorem]{Lemma}
\newtheorem{proposition}[theorem]{Proposition}
\newtheorem{mainthm}[theorem]{Main Theorem}
\theoremstyle{definition}
\newtheorem{definition}[theorem]{Definition}
\newtheorem{remark}[theorem]{Remark}
\numberwithin{equation}{section}
\newtheoremstyle{named}{}{}{\itshape}{}{\bfseries}{.}{.5em}{\thmnote{#3's }#1}
\theoremstyle{named}
\newtheorem*{namedconjecture}{Conjecture}
\newcommand{\HH}{\mathbb{H}}
\newcommand{\ZZ}{\mathbb{Z}}
\newcommand{\CC}{\mathbb{C}}
\newcommand{\RR}{\mathbb{R}}
\newcommand{\Poincare}{Poincar\'e\xspace}
\begin{document}

\baselineskip=17pt  % Reasonable line spacing, single spaced

%%%%%%%%%%%%%%%%

\title{\textbf{Topological Rigidity of Quoric Manifolds}}

\author{
  Ioannis Gkeneralis\thanks{
    Department of Mathematics, Aristotle University of Thessaloniki, Thessaloniki, 54124 Greece. Email: \texttt{igkeneralis@math.auth.gr}}
  \and
  Stratos Prassidis\thanks{
    Department of Mathematics, University of the Aegean, Karlovassi, Samos, 83200 Greece. Email: \texttt{prasside@aegean.gr}}
}

\date{}  % Optional: set date or leave empty

\maketitle

\begin{center}
  \textbf{2020 Mathematics Subject Classification.} Primary 57S25; Secondary 57R18, 20C99.\\
  \textbf{Keywords.} Quoric manifolds, manifolds with corners, locally regular actions, equivariant rigidity.
\end{center}

\section{Introduction}

 Rigidity theorems are central in Geometric Topology. The question is of how much the homotopy type of a manifold can determine its homeomorphism type. The basic conjecture that implies most (if not all) of the known rigidity results so far is the Farrell-Jones Isomorphism Conjecture. The conjecture deals with the non-equivariant case. But, when the manifolds are equipped with further structure (group action, stratification) there are not general, precise conjectures in this case. The Isomorphism Conjecture was inspired by the classical Borel Conjecture. In that direction there is a loosely stated conjecture formulated by F. Quinn.

\begin{namedconjecture}[Quinn]
Let $M^n$ be a manifold with a "nice" group action or a space with "nice" stratification. Let $N^n$ be another manifold that is equivariantly or stratified (respectively) homotopy equivalent to $M^n$. Then $N^n$ and $M^n$ are equivariantly or stratified homeomorphic, respectively.
\end{namedconjecture}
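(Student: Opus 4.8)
\medskip
\noindent\textbf{A plan for the proof.} Quinn's conjecture is deliberately imprecise and, in the blanket form stated, false for arbitrary actions, so some restriction on the action is essential; the plan is to verify it for the class named in the title, namely to show that every closed quoric manifold $M$ is \emph{equivariantly topologically rigid}: if $N$ is a closed manifold and $f\colon N\to M$ is an equivariant homotopy equivalence, then $f$ is equivariantly homotopic to an equivariant homeomorphism. The overarching idea is to trade the equivariant problem on $M$ for a relative, stratified non-equivariant problem on the orbit space, solve the latter with the Farrell--Jones/Borel package, and reassemble. \emph{Step 1 (isovariance and descent).} A quoric manifold is a locally regular $\mathbf{G}$-manifold whose orbit space $Q=M/\mathbf{G}$ is a compact manifold with corners, and $M$ is reconstructed from $Q$ together with a characteristic datum on its faces, in analogy with the reconstruction of a quasitoric manifold from a simple polytope and a characteristic function. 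I would first promote $f$ to an \emph{isovariant} homotopy equivalence --- possible because the isotropy structure of a quoric manifold satisfies the gap and dimension conditions under which equivariant homotopy equivalences straighten to isovariant ones --- and then observe that an isovariant $f$ descends to a map $\bar f\colon Q'\to Q$ of manifolds with corners ($Q'=N/\mathbf{G}$) that preserves the face poset, restricts to a homotopy equivalence on each open face, and intertwines the two characteristic data through the induced isomorphisms of isotropy groups.

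\emph{Step 2 (rigidity of the orbit space with corners).} In the cases of interest the orbit space of a quoric manifold is aspherical, with aspherical faces and contractible links, so I would upgrade $\bar f$ to a homeomorphism $Q'\to Q$ by a relative topological rigidity argument, inducting on the depth of the corner stratification: Borel-type rigidity (via Farrell--Jones) on the top-dimensional interior, then rigidity rel boundary on each face, the pieces being compatible because $\bar f$ already respects the stratification; doubling along faces reduces the bounded cases to closed ones where the isomorphism conjecture applies directly.

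\emph{Step 3 (reassembly).} Over the interior of each face, $M$ is a pullback of a product of sphere/projective bundles with structure group $\mathbf{G}_k$ prescribed by the characteristic datum, and the homeomorphism of orbit spaces together with the matching of that datum yields a canonical fiberwise identification there; I would patch these identifications across the corner stratification from the deepest stratum outward, using equivariant collars and handle moves to absorb the mismatches, producing an equivariant homeomorphism $N\to M$ equivariantly homotopic to $f$.

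\emph{Main obstacle.} The crux is Step 2 and its coupling to Step 3: controlling the relative structure set of the orbit space over its whole corner stratification --- where the fundamental groups of the various faces may be only partially accessible and the low-dimensional faces (dimensions $3$ and $4$) are genuine trouble spots --- and then verifying that the homeomorphism so produced is compatible enough with the characteristic data to lift. The quaternionic nature of the problem sharpens every step: the isotropy groups are nonabelian copies of $\mathbb{S}^3$ and their products, so the local models near singular orbits, the straightening to isovariance in Step 1, and the Bredon-cohomological bookkeeping in the gluing are all substantially heavier than in the classical toric case.
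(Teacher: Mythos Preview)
The statement you were handed is a \emph{conjecture}, not a theorem; the paper does not prove it and explicitly presents it as a loosely stated programme. You correctly recognise this and pivot to the instance the paper actually establishes (the Main Theorem on quoric manifolds over a homotopy polytope). So the comparison below is between your plan and the paper's proof of that Main Theorem.

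Your three-step outline --- descend to the orbit space, prove rigidity there, lift back --- is the paper's strategy too, but the implementations diverge in two places and your stated ``main obstacle'' is an artefact of working under hypotheses weaker than the paper's. First, the paper assumes the orbit space $X$ is a \emph{homotopy polytope}: $X$ and every face are contractible. Hence all fundamental groups in Step~2 are trivial, Farrell--Jones plays no role, and the face-by-face rigidity is just the Poincar\'e Conjecture plus the surgery exact sequence for simply connected pieces; the low-dimensional worries you flag do not arise. Second, your Step~1 invokes gap/dimension criteria to straighten $f$ to an isovariant map; the paper instead shows directly, by elementary dimension counts on fixed sets, that $N$ has exactly the same (standard) isotropy subgroups as $M$, and then proves $N$ is itself \emph{locally regular} by analysing the $Q^n$-representation on the tangent space at a fixed point --- a nontrivial piece of $SU(2)^n$ representation theory (irreducibles with only standard isotropy must be tensor products with at most two nontrivial $2$-dimensional factors) that you do not mention. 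Third, your reassembly step is replaced in the paper by a clean global device: an Euler class $e(M)\in\check{H}^1(X,\mathcal{S})$ for the orbit map, whose vanishing (automatic since $X$ is contractible) identifies each of $M$ and $N$ with its canonical model $\mathcal{M}(\lambda)$, $\mathcal{M}(\lambda')$; the equivariant problem then reduces to showing that the induced face-preserving map $\phi\colon Y\to X$ is face-homotopic to a face-homeomorphism, which is Step~2.

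In short: your plan is a reasonable approach to a more general aspherical-orbit-space version of the problem, but for the theorem actually proved here it imports unnecessary machinery and misses the two technical engines the paper relies on --- the representation-theoretic classification forcing local regularity of $N$, and the canonical-model/Euler-class identification that makes the lifting step automatic once the base is contractible.
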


Our interest is on equivariant rigidity theorems. We consider groups that generalise the classical tori. More specifically, according to the classical Frobenius Theorem, there are three, finite dimensional associative $\RR$-algebras without zero divisors: the reals, the complex and the quaternions. The unit sphere in each of these spaces ($\ZZ/2\ZZ$, $S^1$, $S^3$ respectively) forms a group. Further, we define the corresponding torus ($(\ZZ/2\ZZ)^n$, $(S^1)^n$, $(S^3)^n$ respectively). Starting from a simplicial complex, for the first two cases there is a general construction, given in \cite{dj}, of natural spaces with the action of the group. More specifically, for the reals their methods are encoded (and generalized) in Vinberg's construction for Coxeter groups (\cite{Vinberg}) and for the complex they describe the construction of quasitoric manifolds with the action of the torus $T^n$. Quasitoric manifolds are the topological analogue of toric varieties (\cite{dj}, \cite{BuchPan2000}). The remaining quaternionic construction is given in \cite{ho}. There, starting from a simplicial complex, the construction of a space with an $(S^3)^n$ action is given. When the space is a manifold, it is called a quoric manifold. A variant of the equivariant rigidity question was settled for Coxeter groups in \cite{ps} and for tori in \cite{mp}.

For the remaining case, we generalize the methods used for the Coxeter groups and quasitoric manifolds. Let $Q^n=(S^3)^n$. We say that $Q^n$ acts on a manifold $M^{4n}$ locally regularly if, locally, the action is given by (quaternionic) multiplication or conjugation on each coordinate. Then the quotient is a manifold with corners. Conversely, starting with a manifold with corners and an appropriate function from its faces to the conjugacy classes of subgroups of $Q^n$, we can construct a locally regular (quoric) manifold. Our
main result is the following.

\begin{mainthm}[Rigidity of Quoric Manifolds] Let $M^{4n}$ be a closed locally regular quoric manifold over a nice $n$-manifold with corners $X$ which is a homotopy polytope i.e. all the faces of $X$ (and $X$ itself) are contractible manifolds with corners. Let $N^{4n}$ be a locally linear closed $Q^n$-manifold and $f:N^{4n} \to M^{4n}$ a $Q^n$-equivariant homotopy equivalence. Then $f$ is $Q^n$-homotopic to a $Q^n$-homeomorphism.
\end{mainthm}

The proof of the main theorem follows the methods of \cite{ps} and \cite{mp}.

\begin{itemize}
  \item We show that the action on $N^n$ is locally regular. For this result, we first prove that $N^n$ has the same isotropy groups as $M^n$ and $f$ is an isovariant homotopy equivalence. 
The proof that $N$ is locally regular depends on the action of $Q^n$ on the neighbourhood of a fixed point. That requires careful analysis of the representation of $Q^n$ on the tangent space of the fixed point.Technically it requires more analysis that the torus case since $Q^n$ is not commutative.

  \item Let $Y$ be the quotient manifold with corners of the action. We prove that $N^{4n}$ is $Q^n$-homeomorphic with the standard model constructed from $Y$. For this, there is an obstruction theory analogous to the torus case.
  \item The rest is standard. The map $f$ induces a face preserving homotopy equivalence $\phi: Y \to X$. Induction and standard surgery methods imply that $\phi$ is face homotopic to a face homeomorphism $\psi$. The map $\psi$ induces a $Q^n$-homeomorphism $g: N^n\to M^n$ that is homotopic to $f$.
\end{itemize}

It should be noted that there is an analogue of the quoric manifolds given as a generalization of quasitoric manifolds, namely \emph{Scott spaces} (\cite{scott2}, \cite{scott1}). In that approach, the free abelian group that appears in the torus case, is replaced by the free group as model. Unifying the construction, for the real, complex and quaternions, the presentation uses the properties of the quotient map as a stratified bundle \cite{quinn82} to compute the cohomology of the spaces involved. This approach suggests also a different related rigidity problem for quasitoric and quoric manifolds, namely, the stratified
version of the rigidity question. 

\begin{namedconjecture}
(Stratified Rigidity Question) Let $M$ be a Scott complex or quaternionic manifold with its natural stratification. If $N$ is a stratified space and $f:N\to M$ is a stratified homotopy equivalence, then $f$ is stratified homotopic to a stratified homeomorphism.
\end{namedconjecture}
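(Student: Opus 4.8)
\medskip
\noindent\textbf{Proof strategy for the Main Theorem.}
The plan is to run the three-step scheme sketched above, following \cite{ps} and \cite{mp}, the one genuinely new ingredient being a hands-on analysis of $Q^n=(S^3)^n$-representations. \emph{Step 1 (the action on $N$ is locally regular).} First I would show that $f$ can be $Q^n$-deformed to an isovariant map and that $N$ has exactly the isotropy groups of $M$. Here one uses that, by the structure theory of quoric manifolds of \cite{ho}, every fixed set $M^H$ (for $H$ an isotropy subgroup of $M$) is itself a locally regular quoric manifold over a face of $X$, so that $f$ restricts to homotopy equivalences on all these fixed sets; the equivariant Whitehead theorem together with a stratum-by-stratum deformation then forces the orbit-type strata of $N$ to match those of $M$ and makes $f$ isovariant. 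The substantive point is the local model at a fixed point $q\in N^{Q^n}$: one must identify the isotropy representation of $Q^n$ on $T_qN\cong\RR^{4n}$ with the standard one, namely $\HH^n$ with the $i$-th $S^3$-factor acting on its coordinate copy of $\HH$ either by left multiplication (the $4$-dimensional irreducible) or by conjugation ($\HH=\RR\oplus\RR^3$, trivial $\oplus$ adjoint), all other factors acting trivially on that copy. I would decompose $T_qN$ into $Q^n$-isotypic summands, observe that only the trivial, the $3$-dimensional adjoint, and the $4$-dimensional regular real $S^3$-representations can occur in a module whose $Q^n$-quotient is locally a manifold with corners of the right combinatorial type, and then use isovariance of $f$ together with the homeomorphism type of the quotients near $p=f(q)$ and near $q$ to pin down which factor acts on which summand and whether by multiplication or by conjugation. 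This is where the noncommutativity of $Q^n$ costs real work: unlike the torus case, the isotypic decomposition is not a weight decomposition, so one must separately track the factor labels and the conjugation twists.

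\emph{Step 2 (standard model).} Put $Y=N/Q^n$. By Step 1, $Y$ is a nice $n$-manifold with corners carrying a characteristic function $\lambda_Y$ assigning to each open stratum the conjugacy class of its isotropy. I would prove that $N$ is $Q^n$-homeomorphic to the standard model $\mathcal M(Y,\lambda_Y)=(Y\times Q^n)/{\sim}$ by building, over a handle decomposition of $Y$ adapted to the face filtration, a section of the quotient map compatible with $\lambda_Y$. Over the interior of $Y$ the quotient map is a principal $Q^n$-bundle over a contractible base, hence trivial; pushing the trivialisation inward across each face meets obstructions in the cohomology of $Y$ and of its faces with coefficients in homotopy groups of $Q^n$ and of the homogeneous spaces $Q^n/\lambda_Y(F)$, and since $X$ --- hence $Y$, through the face-preserving homotopy equivalence of quotients --- is a homotopy polytope, every such face is contractible and the obstruction groups vanish. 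This is the exact analogue of the torus argument of \cite{mp}: the homogeneous spaces are now quotients of $(S^3)^n$ rather than of a torus, but the vanishing comes from contractibility of the base strata, not from the group.

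\emph{Step 3 (quotient rigidity and conclusion).} The map $f$ descends to a face-preserving $\phi:Y\to X$ which, $f$ being an isovariant homotopy equivalence, restricts to a homotopy equivalence on every face and hence is a homotopy equivalence of manifolds with corners. Since all faces of $X$ and $Y$ are contractible, an induction over the face poset together with standard surgery --- at each stage a homotopy equivalence of contractible manifolds with corners already a homeomorphism on the boundary, where the $s$-cobordism theorem applies with trivial Whitehead and surgery obstructions because the fundamental groups are trivial --- deforms $\phi$, through face-preserving maps, to a face homeomorphism $\psi:Y\to X$ with $\lambda_X\circ\psi=\lambda_Y$. Feeding $\psi$ into the standard-model descriptions of Step 2 for both $N$ and $M$ produces a $Q^n$-homeomorphism $g:N\to M$, and tracking the construction shows $g$ is $Q^n$-homotopic to $f$.

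I expect Step 1 to be the real obstacle, and within it the identification of the isotropy representation at a fixed point. In the torus case, commutativity makes this representation a sum of complex lines read off combinatorially from $X$; for $Q^n$ one must instead argue that an isovariant homotopy equivalence carries enough of the local representation data --- encoded in the fixed sets of all intermediate isotropy subgroups and in the local corner structure of the quotient --- to force the standard quaternionic form, and one must carry out the bookkeeping separating left-multiplication summands from conjugation summands across the $n$ factors. Everything downstream, the obstruction theory of Step 2 and the surgery of Step 3, is then formally parallel to \cite{ps} and \cite{mp}.
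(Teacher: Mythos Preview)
The statement you were asked to address is the \emph{Stratified Rigidity Question}, which the paper poses as an open conjecture and does not prove; the paper explicitly says that what is established here is the \emph{equivariant} rigidity result (the Main Theorem), and that the stratified version remains a question. Your proposal, by contrast, is labelled ``Proof strategy for the Main Theorem'' and is indeed an outline of the paper's proof of the equivariant theorem, not of the stratified conjecture.

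This mismatch is not cosmetic. In the Stratified Rigidity Question, $N$ is only a stratified space and $f$ only a stratified homotopy equivalence; there is no $Q^n$-action on $N$ assumed, no isotropy groups on $N$, and no isotropy representation at a fixed point to analyse. Every step of your outline --- making $f$ isovariant, identifying the $Q^n$-representation on $T_qN$, forming $Y=N/Q^n$, building a section of the orbit map, and invoking the canonical model $\mathcal{M}(Y,\lambda_Y)$ --- presupposes exactly the equivariant hypotheses that the stratified conjecture drops. So as an attack on the stated conjecture, the proposal does not get started: Step~1 is vacuous because there is no action on $N$ to be locally regular, and Steps~2--3 never receive their input. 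If your intent was to sketch the Main Theorem, your outline matches the paper's approach quite closely; but as a proof of the Stratified Rigidity Question it has a genuine gap at the very first move, and the paper offers no proof to compare it to.
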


In this paper (as in \cite{mp}) what is proved is the equivariant rigidity result. But both quasitoric and quoric manifolds come with a natural stratification,
so that the quotient map is a stratified bundle. So it is natural to ask if the stratified rigidity result is valid in this setting.

\section{Preliminaries}

Let $\HH$ be the space of quaternions and we denote by $Q$ the group of unit quaternions. Notice that $Q=S^3\cong SU(2)$. Since $Q$ is the double cover of the simple group $SO(3)$, the only endomorphisms of $Q$ are either trivial or inner automorphisms. Set $\phi_w:Q\to Q$, then 

$$\phi_w(s) = 
  \begin{cases}
  1 & \text{if  $w=0$} \\ 
  usu^{-1} & \text{if  $w=u$}
  \end{cases}$$
where $u$ is a unit quaternion.

The following are in \cite{ho}.

\begin{proposition} \label{prop:subgroupsIsoQ^n}
For subgroups of $Q^n$ we have 
\begin{itemize}
\item[a)] A subgroup of $Q^n$ is isomorphic to $Q$ if and only if it is of the form:
$$Q(u)=\lbrace (s_1,s_2,...,s_n)\in Q^n : s_i=\psi_{u_i}(t), t\in Q\rbrace$$
for some non-zero $u=(u_1,u_2,...,u_n)\in\mathbb{H}^n$, with each $u_i$ equal to zero or a unit quaternion.
\item[b)] A subgroup of $Q^n$ is isomorphic to $Q^k$ $(0\leq k \leq n)$, if and only if it is of the form
$$Q(u^1, ..., u^k) = Q(u^1)\cdot\cdot\cdot Q(u^k)$$
where $u^i \in \HH^n\setminus \lbrace 0\rbrace$, such that each component $u_i^j$ is zero or a unit quaternion, and for each $i=1,2,...,n$, there is at most one $j$ such that $u_i^j$ is non-zero.
\end{itemize}
\end{proposition}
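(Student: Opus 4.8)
The plan is to establish part (a) directly from the rigidity of endomorphisms of $Q$ recalled above (every endomorphism of $Q$ is trivial or an inner automorphism), and then to bootstrap to part (b) by decomposing $Q^k$ into its $Q$-factors and feeding each factor through (a); the only extra ingredient needed for (b) is that $Q$ is non-abelian.

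For (a), the "if" direction is immediate: if $u=(u_1,\dots,u_n)$ is non-zero with each $u_i$ zero or a unit quaternion, then $t\mapsto(\psi_{u_1}(t),\dots,\psi_{u_n}(t))$ is a group homomorphism $Q\to Q^n$ with image $Q(u)$, and it is injective because at any index $i$ with $u_i\neq 0$ the map $\psi_{u_i}$ is an inner automorphism; hence $Q(u)\cong Q$. For the "only if" direction, let $H\leq Q^n$ with $H\cong Q$, fix an isomorphism $\alpha\colon Q\xrightarrow{\ \sim\ }H$, and compose with the coordinate projections $\pi_i\colon Q^n\to Q$. Each $\pi_i\circ\alpha$ is an endomorphism of $Q$, hence trivial or an inner automorphism, so it equals $\psi_{u_i}$ for some $u_i$ that is $0$ (trivial case) or a unit quaternion (inner case). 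Then $H=\alpha(Q)=\{(\psi_{u_1}(t),\dots,\psi_{u_n}(t)) : t\in Q\}=Q(u)$, and $u\neq 0$ since otherwise $H$ would be trivial.

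For (b), in the "if" direction I would use the coordinate hypothesis (for each $i$ at most one $j$ has $u_i^j\neq 0$) to define $\Phi\colon Q^k\to Q^n$ whose $i$-th coordinate sends $(t_1,\dots,t_k)$ to $\psi_{u_i^{j}}(t_{j})$ for the unique $j$ with $u_i^j\neq 0$ (and to $1$ if there is none); this $\Phi$ is a well-defined homomorphism with image exactly $Q(u^1)\cdots Q(u^k)$, and it is injective because each $u^j$ has some coordinate $i$ at which, by hypothesis, only $t_j$ appears. Thus $Q(u^1)\cdots Q(u^k)$ is a subgroup isomorphic to $Q^k$. For the "only if" direction, write $Q^k=Q_1\times\cdots\times Q_k$, fix an isomorphism $\alpha\colon Q^k\xrightarrow{\ \sim\ }H$, and set $H_j=\alpha(Q_j)$; each $H_j\cong Q$, so by (a) we get $H_j=Q(u^j)$ with $u^j\in\HH^n\setminus\{0\}$ whose components are zero or unit quaternions, and $H=H_1\cdots H_k$ since $\alpha$ is a homomorphism and $Q^k=Q_1\cdots Q_k$. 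To see that for each $i$ at most one $j$ has $u_i^j\neq 0$: if $u_i^j\neq 0$ and $u_i^{j'}\neq 0$ with $j\neq j'$, then $Q_j$ and $Q_{j'}$ commute elementwise in $Q^k$, so $H_j$ and $H_{j'}$ commute elementwise in $Q^n$, and projecting to the $i$-th factor yields $\pi_i(H_j)=\psi_{u_i^j}(Q)=Q$ and likewise $\pi_i(H_{j'})=Q$; hence $Q$ would commute with itself, contradicting that $Q$ is non-abelian.

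The argument is essentially formal once the endomorphism rigidity of $Q$ is available, so there is no single difficult step; the point that takes the most care is the coordinate condition in (b), which is precisely where non-commutativity of $Q$ is used. I should also note that the composites $\pi_i\circ\alpha$ are treated as honest endomorphisms of $Q$, so the subgroups in the statement are read as closed (Lie) subgroups — which is all that is required for the isotropy groups of the locally linear $Q^n$-actions considered later — and that the presentation in part (b) is not unique.
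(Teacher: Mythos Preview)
Your argument is correct. The paper itself does not prove this proposition; it simply records the statement and attributes it to Hopkinson's thesis \cite{ho}, so there is no in-paper proof to compare against. Your approach is the natural one and is essentially what one finds in \cite{ho}: part (a) is forced by the fact (stated just above the proposition) that every endomorphism of $Q$ is trivial or inner, and part (b) follows by decomposing $Q^k$ into its coordinate factors and invoking (a), with the disjointness of the characteristic sets $\gamma(u^j)$ coming from the non-commutativity of $Q$ exactly as you argue. Your caveat that the isomorphism $\alpha$ should be read as a Lie-group isomorphism (so that the composites $\pi_i\circ\alpha$ are continuous endomorphisms) is well placed and matches the intended context.
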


For $u\in\HH^n$, define the \emph{characteristic set} of $u$ as
$$\gamma(u)= \lbrace i\in [n] \mid u_i \neq 0\rbrace$$
where $[n]=\lbrace 1,2,
\cdots n\rbrace$. Any two sets $\gamma_1, \gamma_2\subset [n]$ will be called \emph{compatible} if they are disjoint or if one is a subset of the other.

\begin{proposition}
The conjugacy classes of subgroups of $Q^n$ isomorphic to $Q^k$ for $k \leq n$ are classified by sets of $k$ disjoint characteristic sets $\gamma_i\subset [n]$, $j=1,...,k$,
denoted by $Q_{{\gamma}_1, \dots {\gamma}_k}$. \cite[p.17]{ho}
\end{proposition}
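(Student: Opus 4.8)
The plan is to combine the parametrisation of Proposition~\ref{prop:subgroupsIsoQ^n}(b) with an explicit description of the conjugation action of $Q^n$ on its subgroups, and then extract from any presentation $H=Q(u^1)\cdots Q(u^k)$ a conjugation invariant, namely the unordered family $\Gamma(H):=\{\gamma(u^1),\dots,\gamma(u^k)\}$ of characteristic sets.

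First I would work out the conjugation action. Since $Q^n$ acts on itself by coordinatewise conjugation, and since the generators appearing in Proposition~\ref{prop:subgroupsIsoQ^n}(a) are $\psi_{u_i}(t)=u_i t u_i^{-1}$ when $u_i\neq 0$ and $\psi_0\equiv 1$, a coordinatewise computation gives, for every $v=(v_1,\dots,v_n)\in Q^n$ and $u\in\HH^n$, that $v\,Q(u)\,v^{-1}=Q(v\cdot u)$, where $(v\cdot u)_i=v_iu_i$ (with the convention $v_i\cdot 0=0$): indeed $v_i\psi_{u_i}(t)v_i^{-1}=(v_iu_i)t(v_iu_i)^{-1}=\psi_{v_iu_i}(t)$ when $u_i\neq 0$, and $v_i\cdot 1\cdot v_i^{-1}=1$ when $u_i=0$. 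Consequently $v\,Q(u^1)\cdots Q(u^k)\,v^{-1}=Q(v\cdot u^1)\cdots Q(v\cdot u^k)$, and $\gamma(v\cdot u^j)=\gamma(u^j)$ for all $j$, so conjugation does not disturb which coordinates the generators touch.

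The main step is to show that $\Gamma(H)$ does not depend on the chosen generating tuple, by characterising it intrinsically from $H$. Let $\pi_i\colon Q^n\to Q$ and $\pi_{i,i'}\colon Q^n\to Q\times Q$ be the coordinate projections, and fix a presentation $H=Q(u^1)\cdots Q(u^k)$ as in Proposition~\ref{prop:subgroupsIsoQ^n}(b), so the $\gamma(u^j)$ are pairwise disjoint and nonempty (this is exactly the condition ``at most one $j$ with $u_i^j\neq 0$''). Because of disjointness, in each coordinate at most one factor $Q(u^j)$ is nontrivial, and since each $\psi_{u_i^j}$ with $u_i^j\neq 0$ is an automorphism of $Q$ (hence surjective), one checks directly that: (i) $\pi_i(H)=Q$ if $i\in\bigcup_j\gamma(u^j)$ and $\pi_i(H)=\{1\}$ otherwise; and (ii) for $i\neq i'$ in $S:=\bigcup_j\gamma(u^j)$, the image $\pi_{i,i'}(H)$ is a proper subgroup of $Q\times Q$ (isomorphic to $Q$) when $i,i'$ lie in the same $\gamma(u^j)$, and equals all of $Q\times Q$ when they lie in different ones. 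Hence $S=\{i:\pi_i(H)\neq\{1\}\}$ is determined by $H$, and the relation on $S$ defined by $i\sim i'$ iff $i=i'$ or $\pi_{i,i'}(H)\neq Q\times Q$ is an equivalence relation whose classes are exactly $\gamma(u^1),\dots,\gamma(u^k)$. Thus $\Gamma(H)$ is a well-defined invariant, consisting of $k$ pairwise disjoint nonempty subsets of $[n]$; and since conjugation by $v$ carries $\pi_i(H)$ to $v_i\pi_i(H)v_i^{-1}$ and $\pi_{i,i'}(H)$ to a conjugate, $\Gamma(vHv^{-1})=\Gamma(H)$, so $\Gamma$ descends to a map on conjugacy classes.

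It remains to see that this map is a bijection onto the set of $k$-element families of pairwise disjoint nonempty subsets of $[n]$. Surjectivity: given such $\gamma_1,\dots,\gamma_k$, take $u^j$ to be the indicator vector of $\gamma_j$; these satisfy the hypotheses of Proposition~\ref{prop:subgroupsIsoQ^n}(b), and the resulting subgroup $Q_{\gamma_1,\dots,\gamma_k}$ has $\Gamma=\{\gamma_1,\dots,\gamma_k\}$. Injectivity: if $\Gamma(H)=\{\gamma_1,\dots,\gamma_k\}$, write $H=Q(u^1)\cdots Q(u^k)$ with $\gamma(u^j)=\gamma_j$ after relabelling, and choose $v\in Q^n$ with $v_i=(u_i^j)^{-1}$ for $i\in\gamma_j$ and $v_i=1$ for $i\notin S$; then $v\cdot u^j$ is the indicator vector of $\gamma_j$, so $vHv^{-1}=Q_{\gamma_1,\dots,\gamma_k}$, and any two subgroups with the same family of characteristic sets are conjugate to this common standard model, hence to each other. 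This yields the asserted classification, with $Q_{\gamma_1,\dots,\gamma_k}$ as the distinguished representative of its conjugacy class (the case $k=0$ being the trivial subgroup, matching the empty family). I expect the well-definedness in the third paragraph to be the only real obstacle: one must exclude the possibility that a single subgroup is presented, via Proposition~\ref{prop:subgroupsIsoQ^n}(b), by tuples with combinatorially different support patterns, and the pairwise-projection criterion is precisely what pins the partition down canonically; the remainder is the explicit conjugation bookkeeping together with the surjectivity of the inner automorphisms $\psi_{u_i^j}$.
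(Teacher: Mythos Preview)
The paper does not actually prove this proposition: it is stated with a citation to \cite[p.17]{ho} and no argument is given. So there is no in-paper proof to compare against; your proposal supplies a self-contained proof where the paper defers to Hopkinson's thesis.

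Your argument is correct. The key idea---extracting the partition $\{\gamma(u^1),\dots,\gamma(u^k)\}$ intrinsically from $H$ via the coordinate projections $\pi_i$ and $\pi_{i,i'}$---cleanly handles the only nontrivial point, namely that two different presentations of $H$ as in Proposition~\ref{prop:subgroupsIsoQ^n}(b) must yield the same unordered family of supports. The conjugation computation $vQ(u)v^{-1}=Q(v\cdot u)$ is straightforward, and your normalisation step (choosing $v_i=(u_i^j)^{-1}$ on $\gamma_j$) correctly reduces any such $H$ to the canonical representative $Q_{\gamma_1,\dots,\gamma_k}$. One small remark: transitivity of your relation $\sim$ is not proved directly but is inferred from the fact that, for any fixed presentation, $\sim$ coincides with the partition by the $\gamma(u^j)$; this is fine, but it is worth making explicit that you are using the existence of \emph{some} presentation (guaranteed by Proposition~\ref{prop:subgroupsIsoQ^n}(b)) to conclude $\sim$ is an equivalence relation, and only then invoking its presentation-independence.
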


There is a natural action of $Q^n$ on $\HH^n$ given by coordinate-wise left multiplication,
$$((s_1,...,s_n),(h_1,...,h_n))\mapsto (s_1h_1,...,s_nh_n).$$
We call this action \emph{standard $Q^n$-action} on the \emph{standard $n$-corner} $\HH^n$. Standard actions satisfy the following general properties:

\begin{itemize}
\item[(1)] The orbits are products of $3$-spheres and the origin is the only fixed point.
\item[(2)] The orbit space $\HH^n/Q^n$ is the positive cone $\RR^n_{\geq}=\lbrace (h_1,...,h_n):h_i\geq 0\rbrace$, with radii $\mid h\mid$, $h\in \HH^n$ and the quotient map given by $(h_1,...,h_n)\mapsto (\mid h_1\mid^2,...,\mid h_n\mid^2)$.
\item[(3)] Conjugacy classes $Q^n_{x}$ are naturally associated with the faces of the orbits. We refer to these classes as \emph{isotropy classes}. 
\end{itemize}

Because of the lack of commutativity, the standard $Q^n$-action is very restrictive. We need a more general, although canonical action that includes the standard. Let $Q^n$ act on $\HH^n$ by $Q^n\times \HH^n \to \HH^n$,
\begin{equation} \label{eq:general_action}
((s_1,...,s_n),(h_1,...,h_n))\mapsto (s_{l_1}h_1s_{r_1}^{-1},...,s_{l_n}h_ns_{r_n}^{-1})
\end{equation} 
where each subscript is the index of some coordinate subgroup $Q_k$ (i.e. $Q_k=1\times\cdots\times 1\times Q\times 1\times\cdots 1$, with $Q$ in the k-th position) of $Q^n$ or is the empty set, with $s_{\emptyset}:=1$.

\begin{remark}
While Hopkinson in \cite{ho} focuses on quaternionic representations arising from coordinatewise multiplication by unit quaternions, more general $(S^3)^n$-actions also arise naturally in the study of quaternionic toric varieties. For instance, R. Scott in \cite{scott1} considers a broad class of such actions of $(S^3)^n$, defined via words in the free group on $n$ letters, leading to fibrations that go beyond just multiplication actions.  R. Scott in \cite{scott1}  gives explicit examples of such actions. These richer structures provide additional flexibility in constructing and classifying corners in the equivariant setting.
\end{remark}

In the general setting of equation (\ref{eq:general_action}), conditions $(1) - (3)$ may be violated, unless certain criteria are satisfied \cite[Chapter 3]{ho}. We now describe the criteria. 

Recall that a \emph{face} of the orbit space $\RR^n_{\geq}$ is a subset of points for which some subset of the coordinates is zero. A \emph{facet} $F_i$ is a face of codimension 1. Any face can be specified by the facets that contain it, 
$$F_{\sigma} = \bigcap_{i\in \sigma} F_i = \lbrace (r_1,...,r_n)\in \RR^n_{\geq} | r_i=0, i\in \sigma\rbrace$$ 
which defines the set $\sigma \subset [n]$, where $[n]$ is the set containing the first n natural numbers. 
The set $\mathcal{F}(\RR^n_{\geq})$ of all faces forms a partially ordered set (poset) with respect to the set inclusion. In fact, we define a category with the faces of $\RR^n$ as objects and the inclusions as morphisms, which we denote by $FACE(\RR^n_{\geq})$. Notice that 
$$F_{\sigma} \subset F_{\tau} \Longleftrightarrow \sigma \supset \tau.$$ 
Thus category (poset) $CAT([n])$ of all subsets of $[n]$ ordered by the inclusion can be identified with the category $FACE(\RR^n_{\geq})^{op}$.

For each $n\geq 0$, the set of all conjugacy classes of $Q^n$ isomorphic to $Q^k$ (some $0\leq k\leq n$) has a natural partial order derived from the subclass relation, so has the structure of a poset. Thus, we can define a category with these conjugacy classes as objects and subclass inclusion defining the morphisms, which we denote by $CONJ(Q^n)$. The action determines
a functor
$$\ell: CAT([n]) \to FACE(\RR^n_{\geq})^{op} \to CONJ(Q^n)$$
mapping each face to its isotropy class. 

\begin{definition}
A functor $\ell : CAT([n]) \to CONJ(Q^n)$ is an acceptable isotropy functor  if  $\ell$ is injective on the objects of $CAT([n])$.
\end{definition}

\begin{definition} \label{def:regular_corner}
Actions that determine an acceptable isotropy functor are called \emph{regular actions}. A space $\HH^n$ which is equipped with a regular $Q^n$-action is a \emph{regular $Q^n$-corner}.
\end{definition}

\begin{remark}
Any regular corner satisfies the general properties that we have identified in the standard action, namely (1) the orbits are products of $3$-spheres, (2) the orbit space is the product space 
$\RR^n_{\geq}$ and (3) the isotropy class of an orbit in a face $F_{\sigma}$ is a rank $|\sigma|$ class of subgroups in $Q^n$.
\end{remark}

\begin{remark}
In \cite{ho}) there are different characterizations of acceptability using either matrices or graphs. In \cite[p.60]{ho}) it is shown that all the characterizations are equivalent.
\end{remark}

\section{The canonical model}

In this section, we construct a quoric manifold from an n-manifold with corners $X$ and some linear data on the set of facets of $X$. This is analogous to the construction of the canonical model for torus actions \cite{BuchPan2000,bp,dj, mp2006}. 
\begin{definition}
Suppose \( Q^n \) acts on a manifold \( M^{4n} \). A \emph{regular chart} is a pair \( (U, \phi) \), where \( U \) is a \( Q^n \)-stable open subset of \( M^{4n} \), and \( \phi \) is an equivariant homeomorphism \( \phi: U \to \HH^n \) to a regular \( n \)-corner. That is, \( \phi \) satisfies 
\[
\phi(t \cdot y) = t \cdot \phi(y)
\]
for all \( t \in Q^n \), \( y \in U \subset M^{4n} \), where the action \( t \cdot \phi(y) \) is that of some regular corner (Definition \ref{def:regular_corner}).
A \( Q^n \)-action on \( M^{4n} \) is said to be \emph{locally regular} if is effective and \( M^{4n} \) has a regular atlas, that is, if every point of \( M^{4n} \) lies in a regular chart.
\end{definition}

The orbit space of a regular action is the positive cone $\RR^n_{\geq}$. Therefore, the orbit space of a locally regular action is a space locally modelled by $\RR^n_{\geq}$. The following definition formalizes this property.

\begin{definition}
A space $X$ is an \emph{n-manifold with corners} if it is Hausdorff, second countable space equipped with an atlas of open sets homeomorphic to open subsets of $\RR^n_{+}$ such that the overlap maps are local homeomorphisms that preserve the natural stratification of $\RR^n_{+}$ \cite{dav}.
\end{definition}

A manifold with corners $X^n$ is called \emph{nice} if
\begin{itemize}
\item[(1)] for every $0\leq k \leq n$ there is a codimension-$k$ face $F$,
\item[(2)] for each codimension-$k$ face $F$, there are exactly $k$ facets $F_1,...,F_k$ such that $F$ is a connected component of $F_1\cap\cdots\cap F_k$. Moreover $F$ does not intersect any other facet.
\end{itemize}

A nice manifold with corners $X^n$ is a \emph{homotopy polytope} if it is contractible as a manifold itself and each codimension-1 face (facet) is itself a contractible manifold with corners. The \emph{k-skeleton} of a manifold with corners $X^n$ is the set of all faces of dimension less than or equal to $k$ and it is denoted by $X^{(k)}$. \cite{BuchPan2000}

\begin{definition} \label{def:quoric}
Given a manifold with corners $X^n$, a \emph{quoric manifold} $M^{4n}$ over $X^n$ is a $Q^n$-manifold such that:
\begin{itemize}
\item[(1)] the action of $Q^n$ on $M^{4n}$ is locally regular,
\item[(2)] there is a continuous projection $\pi : M^{4n} \to X^n$, whose fibers are $Q^n$-orbits. In other words, the orbit space on $M^{4n}$ is homeomorphic to a manifold with corners $X^n$.
\end{itemize}
\end{definition}

Let $X^n$ be a manifold with corners. Any face of $X^n$ can be specified by the facets that contain it, $F_{\sigma}=\cap_{a\in \sigma} F_a$. The set of all $\sigma$ for which $F_{\sigma}$ is a face of $X^n$ is a poset and we denote it by $K_X$. Let $v=F_{\sigma_v}$ be a vertex of a manifold with corners $X$, where $\sigma_v=\lbrace a_1,...,a_n\rbrace$ in $K_X$. Restricting $CAT(K_X)$ to the vertex $v$, that is restricting to the subsets of $\sigma_v$, defines a full subcategory $CAT(K_X\mid_v)$ of $CAT(K_X)$, which is isomorphic to $CAT([n])\cong FACE(\RR^n_{\geq})$. So $r_v:CAT([n])\to CAT(K_X\mid_v)$ defined by $\sigma\mapsto\lbrace a_i\in \sigma_v\subset [n]\mid i\in\sigma \rbrace$ is an isomorphism. 

\begin{definition}
Let $X^n$ be a manifold with corners, a \emph{characteristic functor} over $X^n$ is a functor $\lambda : CAT(K_X) \to CONJ(Q^n)$ such that its restriction to any vertex $v$ of $X$ composed with $r_v$, is an acceptable isotropy functor,
$$l_v=\lambda\mid_v\circ r_v : CAT([n]) \overset{\cong}{\to} CAT(K_X\mid_v) \to CONJ(Q^n)$$
Also, a pair $(X^n,\lambda)$ is called a \emph{characteristic pair}.
\end{definition}

The next result is immediate.

\begin{proposition}
Any quoric manifold $M^{4n}$ over a manifold with corners $X^n$, has a characteristic functor over $X^n$. \cite[p.82]{ho}
\end{proposition}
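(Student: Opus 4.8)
The plan is to read the functor $\lambda$ directly off the isotropy data of the $Q^n$-action and then to verify functoriality and the vertex condition by pulling everything back to local regular charts, where the isotropy structure is already completely understood.

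\textbf{Defining $\lambda$ on objects.} For $\sigma\in K_X$ let $F_\sigma^{\circ}$ be the relative interior of the face $F_\sigma$. I would pick any $x\in F_\sigma^{\circ}$ and any point $p$ in the fibre $\pi^{-1}(x)$, and set $\lambda(\sigma)=[\,Q^n_p\,]\in CONJ(Q^n)$, the conjugacy class of the isotropy group of $p$. Well-definedness has two parts. First, independence of the choice of $p$: by Definition \ref{def:quoric} the fibre $\pi^{-1}(x)$ is a single $Q^n$-orbit, so the stabilisers of its points form exactly one conjugacy class. Second, independence of $x$ within $F_\sigma^{\circ}$: around $p$ choose a regular chart $(U,\phi)$ with $\phi:U\overset{\cong}{\to}\HH^n$ equivariant for a regular corner; then $\phi$ descends to a stratum-preserving homeomorphism of a neighbourhood of $x$ in $X$ onto a neighbourhood of the orbit of $\phi(p)$ in $\RR^n_{\geq}=\HH^n/Q^n$, carrying $F_\sigma^{\circ}$ near $x$ into the relative interior of a codimension-$|\sigma|$ face of $\RR^n_{\geq}$. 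Since for a regular corner the assignment (face $\mapsto$ isotropy class) is precisely the functor $\ell:CAT([n])\to CONJ(Q^n)$, the class $[Q^n_p]$ is locally constant on $F_\sigma^{\circ}$; connectedness of $F_\sigma$ (for $X$ nice, or else one argues component by component) promotes this to constancy. On an overlap of two regular charts the two computed classes coincide automatically, because $Q^n_p$ is intrinsic to the action and makes no reference to any chart.

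\textbf{Functoriality and the vertex condition.} Both $CAT(K_X)$ and $CONJ(Q^n)$ are posets, so it is enough to check that $\lambda$ is order-preserving. If $\sigma\subseteq\tau$ in $K_X$ then $F_\tau\subseteq F_\sigma$; choosing $x\in F_\tau^{\circ}$, $p\in\pi^{-1}(x)$ and a regular chart at $p$ identifies the inclusion $F_\tau\hookrightarrow F_\sigma$ with an inclusion of faces inside a regular corner, whose isotropy classes satisfy the subclass relation because $\ell$ is a functor there; hence $\lambda(\sigma)\subseteq\lambda(\tau)$. For the vertex condition, fix a vertex $v=F_{\sigma_v}$ with $\sigma_v=\{a_1,\dots,a_n\}$. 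Picking $p\in\pi^{-1}(v)$ and a regular chart at $p$, the point $p$ corresponds to the origin of $\HH^n$ with full isotropy $Q^n$ (and $\pi^{-1}(v)=\{p\}$ since $[Q^n]$ is the only rank-$n$ class), and the chart identifies the faces of $X$ through $v$ with $FACE(\RR^n_{\geq})\cong CAT([n])$; after matching the $n$ facets through $v$ with $[n]$ this identification is exactly $r_v$, and under it $\lambda|_v$ becomes the isotropy functor $\ell$ of the regular corner $\phi(U)=\HH^n$. That functor is acceptable by Definition \ref{def:regular_corner}, so $l_v=\lambda|_v\circ r_v$ is acceptable. Therefore $\lambda$ is a characteristic functor over $X^n$ and $(X^n,\lambda)$ is a characteristic pair.

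\textbf{The main obstacle} is the well-definedness step for $\lambda$ on objects: one must be sure that the conjugacy class extracted from a local regular chart is both chart-independent and locally constant along each face. The non-commutativity of $Q$, which was invisible in the torus case, means that an individual stabiliser $Q^n_p$ is genuinely chart-dependent; the resolution is that we record only its class in $CONJ(Q^n)$, and the subgroup $Q^n_p$ itself does not depend on any chart, so the ambiguity disappears. The only remaining subtlety is then the local-to-global argument (locally constant $\Rightarrow$ constant), which is precisely where connectedness of faces, i.e. niceness of $X$, is needed.
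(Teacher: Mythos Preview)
The paper does not give its own proof of this proposition; it merely states the result and cites \cite[p.82]{ho}. Your argument is the natural one and is correct: define $\lambda(\sigma)$ as the isotropy conjugacy class over the relative interior of $F_\sigma$, use regular charts to see that this is locally constant along each open face and compatible with inclusions, and at a vertex identify $\lambda|_v\circ r_v$ with the acceptable functor $\ell$ of the regular corner supplied by the chart. There is nothing further to compare, since the paper outsources the proof entirely to Hopkinson.
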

 
\paragraph*{\textbf{Construction}}For $x\in X^n$, we denote by $F_{\tau(x)}$ the smallest face of $X^n$ that contains $x$ in its relative interior. Let $\hat{\lambda}(\tau(x))<Q^n$ denote the canonical subgroup 
$Q(u^1,...,u^n)$ in the conjugacy class $\lambda(\tau(x))=Q_{\gamma_1,...,\gamma_k}$, defined by $u^1,...,u^k \in \HH^n$ where $(u^i)_j=1$ if $j\in \gamma_i$ or $0$ otherwise. Define:
$$\mathcal{M}(\lambda) = Q^n \times X^n/\sim_{\lambda}$$
such that $(q,x)\sim_{\lambda}(q',x') \Longleftrightarrow x=x'$ and $q^{-1}q'\in \hat{\lambda}(\tau(x))$. The space $\mathcal{M}(\lambda)$ is a quoric manifold and $Q^n$ acts on it by acting on the first coordinate and the orbit space is $X^n$ \cite[p.86]{ho}.  It is called the \emph{canonical model} of the pair $(X^n, \lambda)$ or the \emph{derived quoric manifold}.

\begin{definition} 
Let $\phi: Y\to X$ be a map between n-manifolds with corners.
\begin{itemize}
\item[(1)] $\phi$ is called \emph{skeletal} if it preserves skeleta, i.e. $\phi (Y^{(k)})\subset X^{(k)}$.
\item[(2)] $\phi$ is called \emph{face-preserving} if for each face $F$ of $Y$, $\phi (F)$ is a face of $X^n$.
\end{itemize}
\end{definition}
Some natural properties of the above construction are summarised in the following remark. These results are implicit in \cite{mp}.
\begin{remark}
\begin{itemize}
\item[(1)] Starting with two quoric manifolds $N^{4n}$ and $M^{4n}$ over $Y$ and $X$, respectively, and a $Q^n$-equivariant homotopy equivalence between them, say $f: N^{4n}\to M^{4n}$, we get a face-preserving homotopy equivalence $\phi :Y\to X$ induced by $f$. \cite[Proposition 3.8]{mp}
\item[(2)] In addition, if $X$ is a homotopy polytope, then $Y$ is also a homotopy polytope. \cite[Corollary 3.9]{mp}
\item[(3)] The face-preserving homotopy equivalence $\phi$ induces an equivariant map $\phi_{\ast}:\mathcal{M}(\lambda')\to \mathcal{M}(\lambda)$. \cite[Proposition 3.10]{mp}
\end{itemize}
\end{remark}
 
Now, let $X$ be a nice n-manifold with corners and $F$ be a face of $X$, we denote with $Q_F$ the corresponding canonical group $Q(u^1, \dots,u^k)$.

The next result is the reverse construction of the above observations, which also appears in \cite[Proposition 3.13]{mp}.

\begin{proposition} \label{prop:reverse_construction}
Let $(X, \lambda)$ and $(Y,\lambda')$ be two characteristic pairs such that $X$ and $Y$ are homotopy polytopes. Let $f:\mathcal{M}(\lambda')\to\mathcal{M}(\lambda)$ be an equivariant homotopy equivalence. Then
\begin{itemize}
\item[(1)] the map $\phi:Y\to X$ induced on the quotients is a face-preserving homotopy equivalence,
\item[(2)] $Q_{F'}=Q_{\phi(F')}$ for each facet $F'$ of $Y$,
\item[(3)] there is an equivariant homotopy such that $f\simeq \phi_{\ast}$.
\end{itemize}
\end{proposition}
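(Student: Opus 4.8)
The plan is to treat the three assertions in order, since each one feeds the next. For part (1), I would start from the fact (recorded in the preceding remark, from \cite[Proposition 3.8]{mp}) that an equivariant homotopy equivalence between quoric manifolds descends to a homotopy equivalence $\phi:Y\to X$ on orbit spaces; the content to be verified here is that $\phi$ is \emph{face-preserving}. The key observation is that the canonical model $\mathcal M(\lambda)$ remembers its stratification intrinsically: the isotropy group of a point $[q,x]$ is $q\,\hat\lambda(\tau(x))\,q^{-1}$, so the orbit-space stratum through $x$ is detected by the conjugacy class $\lambda(\tau(x))\in CONJ(Q^n)$. Since $f$ is equivariant it sends an orbit with isotropy class $c$ to an orbit with isotropy class $c$ (homotopy equivalences between $G$-spaces preserve isotropy up to conjugacy when the actions are locally linear, and here everything is locally regular), so $\phi$ carries the stratum of $Y$ labelled $c$ into the stratum of $X$ labelled $c$. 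A stratum of a nice manifold with corners is (a union of relative interiors of) faces, and $\phi$ being a homotopy equivalence that respects the stratification is enough, after passing to closures, to conclude $\phi(F)$ is a face of $X$ for every face $F$ of $Y$; one also gets that $\phi$ restricts to a homotopy equivalence on each face and its boundary. I would phrase this using the skeletal filtration: $\phi$ preserves the number of local facets at a point, hence is skeletal, hence (using niceness) face-preserving.

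For part (2), fix a facet $F'$ of $Y$ and let $F=\phi(F')$, a facet of $X$ by part (1). Both $Q_{F'}$ and $Q_F$ are the canonical rank-$1$ subgroup $Q(u)$ attached to the value $\lambda'(\tau(F'))$ resp.\ $\lambda(\tau(F))$ in $CONJ(Q^n)$, so it suffices to show these two conjugacy classes of copies of $Q$ coincide. This again follows from equivariance of $f$: an orbit in $\pi^{-1}(\mathrm{int}\,F')$ has isotropy conjugate to $Q_{F'}$, its image under $f$ is an orbit in $\pi^{-1}(\mathrm{int}\,F)$ with isotropy conjugate to $Q_F$, and an equivariant homotopy equivalence forces these isotropy subgroups to be conjugate in $Q^n$. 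Because the characteristic functor only records conjugacy classes, and because for facets the canonical representative $Q(u)$ is determined by its characteristic set $\gamma(u)$ (Proposition \ref{prop:subgroupsIsoQ^n} and the classification of conjugacy classes), we conclude $Q_{F'}$ and $Q_{\phi(F')}$ are literally the same canonical subgroup.

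For part (3), the point is to upgrade $f$ to the ``standard'' equivariant map $\phi_\ast:\mathcal M(\lambda')\to\mathcal M(\lambda)$ covering $\phi$, which exists by the remark (from \cite[Proposition 3.10]{mp}) precisely because, by part (2), $\phi$ matches up the characteristic data facet-by-facet. Both $f$ and $\phi_\ast$ cover the \emph{same} map $\phi$ on orbit spaces and are $Q^n$-equivariant, so over the free part of the action they are both sections of a principal $Q^n$-bundle pulled back along $\phi$, and the obstruction to an equivariant homotopy between them lies in cohomology of $Y$ (relative to the singular set) with coefficients built from $\pi_\ast(Q^n)=\pi_\ast(S^3)^n$. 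Since $Y$ is a homotopy polytope, $Y$ and all its faces are contractible, so these obstruction groups vanish and the homotopy can be constructed by induction over the skeleta of $Y$ (extending over the facets using contractibility, then over higher-codimension faces), exactly as in the torus case. I expect the main obstacle to be the bookkeeping in part (3): one must set up the relative obstruction theory so that at each stage the extension problem over a face $F$ is governed by $[F,\,\Omega^{?}(Q^n/Q_F)]$-type groups and use the contractibility of $F$ — noncommutativity of $Q^n$ makes the coefficient systems mildly more delicate than for $T^n$, but contractibility of all faces kills everything, so the argument of \cite{mp} goes through with only notational changes.
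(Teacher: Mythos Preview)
The paper does not supply a proof of this proposition; it is stated and attributed to \cite[Proposition~3.13]{mp}, with the surrounding remark already recording parts of the argument (face-preservation of $\phi$ and the construction of $\phi_\ast$) as citations of \cite{mp}. Your outline is correct and is precisely the argument of \cite{mp} carried over to the quoric setting: (1) follows from isotropy-preservation under equivariant maps plus niceness, (2) from the fact that equivariance gives $G_y\subset G_{f(y)}$ and that a rank-$1$ subgroup of $Q^n$ isomorphic to $Q$ has no proper subgroup isomorphic to $Q$ (so the inclusion is an equality, and the canonical representatives determined by characteristic sets coincide), and (3) from a skeleton-by-skeleton obstruction argument killed by contractibility of all faces of $Y$.
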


\section{On sections of orbit maps and Canonical Models}

Let $M^{4n}$ be a quoric manifold with orbit (quotient) map $\pi:M^{4n}\to X^n$. The $Q^n$-action determines a characteristic functor on $CAT(K_X)$ \cite{ho}. Let $\mathcal{M}(\lambda)$ be the canonical model associated to the pair $(X^n,\lambda)$. In order to compare $M^{4n}$ with its canonical model $\mathcal{M}(\lambda)$, we construct sections of the orbit (quotient) map $\pi:M^{4n}\to X^n$. As with the classical  bundle theory, we associate to the orbit map its Euler class, we prove that it has certain homotopy invariance properties, and its vanishing is equivalent to the existence of a section to the orbit map, which is always true in our case since the base point is contractible. 
This is a generalisation of the ideas analysed in \cite{Yoshida}. Our analysis follows closely \cite{Yoshida}. 

\begin{lemma} \label{lemma:Sections1}
For a characteristic pair $(X^n,\lambda)$, the projection map $\pi':\mathcal{M}(\lambda)\to X^n$ admits a section. 
\end{lemma}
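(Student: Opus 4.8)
The plan is to build the section directly from the structure of the canonical model $\mathcal{M}(\lambda) = Q^n \times X^n/\sim_\lambda$. Recall that $X$ is a homotopy polytope, hence in particular contractible. The most direct approach is: since $\mathcal{M}(\lambda)$ is the quotient of the trivial bundle $Q^n \times X^n$ by the fibrewise relation collapsing the $\hat\lambda(\tau(x))$-cosets, the constant map $x \mapsto [(1,x)]$ is a well-defined continuous section. Indeed the map $X^n \to Q^n \times X^n$, $x \mapsto (1,x)$, is continuous, and composing with the (continuous, open) quotient map $Q^n \times X^n \to \mathcal{M}(\lambda)$ gives $s(x) = [(1,x)]$; then $\pi' \circ s = \mathrm{id}_{X^n}$ because $\pi'$ is induced by the projection onto the second factor. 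So in the canonical model a section exists for trivial reasons — no contractibility of $X$ is even needed, and the content of the lemma is essentially a sanity check that the obstruction theory developed in the rest of the section is vacuous for the model.

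To present this carefully I would first recall the quotient map $p: Q^n\times X^n \to \mathcal{M}(\lambda)$ and note that $\pi'([q,x]) = x$ is well-defined since the relation $\sim_\lambda$ only identifies points with the same second coordinate. Then I define $s = p\circ \iota$ where $\iota: X^n \to Q^n\times X^n$ is $x\mapsto (1,x)$. Continuity of $s$ is immediate from continuity of $p$ and $\iota$. The identity $\pi'\circ s = \mathrm{id}$ is a one-line check. This makes the lemma a short formal verification.

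Alternatively, if the paper wants the proof to foreshadow the general obstruction-theoretic argument (as the surrounding text on Euler classes suggests), I would instead argue by induction on the skeleton $X^{(k)}$ of $X$: a section over $X^{(0)}$ is chosen freely on the (discrete, or at least $0$-dimensional) vertex set using that over a vertex $v$ the preimage $\pi'^{-1}(v)$ is a single point (the isotropy there is all of $Q^n$); then extend over each $k$-face $F$, noting that over the relative interior of $F$ the bundle is $Q^n/Q_F \times \mathrm{int}(F)$, a trivial fibre bundle with fibre a product of $3$-spheres, and that the obstruction to extending a section from $\partial F$ across $F$ lies in $H^{*}(F,\partial F; \pi_{*-1}(Q^n/Q_F))$; since $F$ is a contractible manifold with corners (homotopy polytope hypothesis), one can instead directly observe that the restriction of $\mathcal{M}(\lambda)$ to $F$ deformation retracts onto its restriction over a point, killing all obstructions. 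Either way contractibility of the faces is what guarantees the extension.

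The main (and only) obstacle is purely expository: deciding whether to give the trivial one-line argument in the canonical model or the inductive one that parallels the general case treated later in the section. I expect the cleanest choice is the direct construction $s(x)=[(1,x)]$, with a remark that contractibility of $X$ is what makes the analogous statement for a general quoric manifold $M^{4n}$ (not just the model) true — that being the real content, handled by the Euler class vanishing discussed above. No genuine mathematical difficulty arises; the care needed is only in checking well-definedness of $\pi'$ and continuity of the quotient map, both of which are standard for quotients by a fibrewise closed equivalence relation with compact group orbits.
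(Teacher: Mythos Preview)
Your primary approach---defining $s'(x)=[(1,x)]$ via the quotient map and checking $\pi'\circ s'=\mathrm{id}_{X^n}$---is exactly the paper's proof. Your observation that contractibility of $X$ is not needed here is correct; the paper likewise makes no use of it in this lemma, reserving that hypothesis for the general quoric manifold case treated via the Euler class.
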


\begin{proof}
Since $\mathcal{M}(\lambda)$ is a quoric manifold over $X^n$, the fibers of $\pi'$ are $Q^n$, so they are groups. Thus $\mathcal{M}(\lambda)$ admits the section $s': X^n\to \mathcal{M}(\lambda)$, $s'(p)=[1,p]$.
\end{proof}

\begin{proposition} \label{prop:hom2section}
Let $(X^n,\lambda)$ be a characteristic pair as above. Let $M^{4n}$ be a quoric manifold over $X^n$. Then $M^{4n}$ is $Q^n$-homeomorphic to $\mathcal{M}(\lambda)$ if and only if $\pi:M^{4n}\to X^n$ admits a section.
\end{proposition}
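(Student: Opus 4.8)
The proposition is an ``if and only if,'' and one direction is essentially Lemma \ref{lemma:Sections1} pushed through a homeomorphism: if $M^{4n}$ is $Q^n$-homeomorphic to $\mathcal{M}(\lambda)$ over $X^n$, then composing the section $s'$ of $\pi'$ with that homeomorphism produces a section of $\pi$. (One must check the homeomorphism is over $X^n$, i.e.\ commutes with the orbit maps; this is automatic since a $Q^n$-homeomorphism descends to a homeomorphism of orbit spaces, and we may assume — or arrange by composing with this induced map — that the induced map on $X^n$ is the identity.) So the content is the converse: a section yields a $Q^n$-homeomorphism $M^{4n}\cong\mathcal{M}(\lambda)$.

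For the converse I would argue as follows. Let $s:X^n\to M^{4n}$ be a section of $\pi$. Define a map $\Phi:\mathcal{M}(\lambda)\to M^{4n}$ on representatives by $[q,x]\mapsto q\cdot s(x)$. First I would check this is well defined: if $[q,x]=[q',x]$ then $q^{-1}q'\in\hat\lambda(\tau(x))$, and I need $q\cdot s(x)=q'\cdot s(x)$, i.e.\ that $\hat\lambda(\tau(x))$ is contained in the isotropy group of $s(x)$. This is where the hypotheses on $M^{4n}$ enter: since $s(x)$ lies over $x$, its $Q^n$-isotropy group is a subgroup in the conjugacy class $\lambda(\tau(x))$, and both it and $\hat\lambda(\tau(x))$ have rank $|\tau(x)|$; I would use a regular chart around $s(x)$ to see that in fact the isotropy group of $s(x)$ equals $\hat\lambda(\tau(x))$ (not merely conjugate to it) — the section picks out a specific point whose isotropy is the canonical subgroup, after possibly normalizing the chart, or more robustly one shows the two rank-$k$ subgroups in the same conjugacy class that both fix the same orbit-through-$x$ data must coincide. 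Granting this, $\Phi$ is well defined, and it is manifestly $Q^n$-equivariant (acting on the first coordinate) and fiber-preserving over $X^n$.

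Next I would show $\Phi$ is a bijection and a homeomorphism. Surjectivity: any $m\in M^{4n}$ lies in the orbit over $x=\pi(m)$, and since $s(x)$ is in that same orbit, $m=q\cdot s(x)$ for some $q$, so $m=\Phi([q,x])$. Injectivity: if $q\cdot s(x)=q'\cdot s(x')$ then applying $\pi$ gives $x=x'$, and then $q^{-1}q'$ fixes $s(x)$, hence lies in the isotropy group $\hat\lambda(\tau(x))$, so $[q,x]=[q',x]$ in $\mathcal{M}(\lambda)$. Continuity of $\Phi$ follows from continuity of $s$ and of the action; that $\Phi$ is open (equivalently $\Phi^{-1}$ continuous) I would get either from invariance of domain, since both spaces are $4n$-manifolds and $\Phi$ is a continuous bijection, or by constructing the inverse locally using local sections/regular charts. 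It then follows formally that $M^{4n}$ is $Q^n$-homeomorphic to $\mathcal{M}(\lambda)$.

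\textbf{Main obstacle.} The delicate point is the well-definedness step: identifying the isotropy group of $s(x)$ with the \emph{canonical} subgroup $\hat\lambda(\tau(x))$ rather than just with some conjugate. A priori a section only guarantees the isotropy of $s(x)$ lies in the conjugacy class $\lambda(\tau(x))$. To pin it to $\hat\lambda(\tau(x))$ one either (i) restricts attention to charts in which the regular-corner identification sends isotropy to canonical subgroups and checks compatibility across overlaps (which is exactly the content of $\lambda$ being a characteristic functor, so that the local identifications are coherent), or (ii) modifies the section on lower skeleta by a continuous choice of conjugating elements. Because $X^n$ is a homotopy polytope — all faces contractible — such coherent choices exist, mirroring the obstruction-theoretic argument sketched in the introduction; but the non-commutativity of $Q$ means the conjugating elements are only well defined up to the normalizer of the canonical subgroup, so one must check these indeterminacies assemble consistently. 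I expect this bookkeeping, rather than the bijection or the topology, to be the real work of the proof.
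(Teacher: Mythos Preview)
Your approach is exactly the paper's: for the forward direction you compose the canonical section $s'$ of Lemma~\ref{lemma:Sections1} with $h^{-1}$, and for the converse you define $\Phi([q,x])=q\cdot s(x)$. The paper's entire argument for the converse is the single sentence ``Define the map $h:\mathcal{M}(\lambda)\to M^{4n}$ by $h(q,x)=q\cdot s(x)$ is obviously a $Q^n$-homeomorphism,'' so you have in fact been more careful than the paper, not less.

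The subtlety you flag as the ``main obstacle'' is genuine and the paper does not address it. For $\Phi$ to descend from $Q^n\times X^n$ to $\mathcal{M}(\lambda)$ one needs $\hat\lambda(\tau(x))\subseteq Q_{s(x)}$, whereas a priori $Q_{s(x)}$ is only a conjugate of $\hat\lambda(\tau(x))$; since $Q^n$ is nonabelian this is a real issue that does not arise in the torus analogue. Two remarks on your proposed resolution: first, the proposition as stated does \emph{not} assume $X$ is a homotopy polytope, so invoking contractibility of faces imports a hypothesis that is only introduced later (indeed the proposition is only ever applied in the paper through Corollary~\ref{cor:topclas2}, where $X$ is contractible). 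Second, a cleaner route that avoids obstruction theory is to form the model $\mathcal{M}_s:=Q^n\times X^n/\!\sim_s$ with $(q,x)\sim_s(q',x)$ iff $q^{-1}q'\in Q_{s(x)}$; then $\Phi$ is tautologically well defined on $\mathcal{M}_s$ and gives a $Q^n$-homeomorphism $\mathcal{M}_s\cong M^{4n}$, and one is reduced to comparing $\mathcal{M}_s$ with $\mathcal{M}(\lambda)$, which amounts to finding a continuous family of conjugating elements $g_x\in Q^n$ with $g_xQ_{s(x)}g_x^{-1}=\hat\lambda(\tau(x))$. Either way, the bookkeeping you anticipate is the actual content, and the paper's ``obviously'' hides precisely this point.
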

\begin{proof}
Let $h$ denote the $Q^n$-homeomorphism between $M^{4n}$ and $\mathcal{M}(\lambda)$. 
Define $s:X^n\to M^{4n}$ such that,
$$s(x)=h^{-1}\circ s'(x)$$
where $s'$ is defined in Lemma \ref{lemma:Sections1}, then $\pi\circ s(x) =\pi\circ h^{-1}\circ s'(x)=\pi'\circ s'(x)=id_P(x)=x$.

On the other hand, let $s:X^n\to M^{4n}$ be the section of $\pi$. Define the map $h:\mathcal{M}(\lambda)\to M^{4n}$ by $h(q,x)=q\cdot s(x)$ is obviously a $Q^n$-homeomorphism. 
\end{proof}

Notice that, by construction, the orbit map 
$$\pi_{\HH^n}:\HH^n \to \RR^n_{\geq}, \;\; \pi_{\HH^n}(h):=(\mid h_1\mid ^2,...,\mid h_n\mid ^2)$$
admits a section $i:\RR^n\to \HH^n$ defined by $i(h)=(h_1^{1/2},\cdots,h_n^{1/2})$. 

\begin{proposition}\label{prop:construction theta}
If $\pi : M^{4n} \to X^n$ has a section $s$ and $i$ is the section of $\pi_{\HH^n}$, then there exists a regular atlas of $M$,
$(U_{\alpha}^M, \phi_{\alpha}^M)$, $\alpha\in \mathcal{A}$ such that the following diagram commutes\\
\begin{center}
\begin{tikzcd}
U_{\alpha}^M \arrow[r, "\phi_{\alpha}^M"] 
& \phi_{\alpha}^M(U_{\alpha}^M)  \\
U_{\alpha}^X \arrow[r, "\phi_{\alpha}^X" ] \arrow[u, "s"]
& \phi_{\alpha}^X(U_{\alpha}^X) \arrow[u, "i"]
\end{tikzcd}
\end{center}
where $(U_{\alpha}^X,\phi_{\alpha}^X)$ is the atlas of $X^n$ induced by $(U_{\alpha}^M, \phi_{\alpha}^M)$.
\end{proposition}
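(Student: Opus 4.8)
The plan is to build the desired atlas locally around each orbit and then show the charts can be chosen compatibly with the given section $s$. Start with an arbitrary regular atlas $(V_\beta, \psi_\beta)$ of $M^{4n}$, which exists because the action is locally regular. For a fixed point $x_0 \in X^n$, pick $\beta$ with $x_0$ in the image of the orbit $\pi(V_\beta)$; shrinking $V_\beta$ to a $Q^n$-stable neighbourhood $U^M_\alpha$ of $\pi^{-1}(x_0)$, the chart $\psi_\beta$ restricts to an equivariant homeomorphism $\phi^M_\alpha : U^M_\alpha \to \HH^n$ onto an open regular $n$-corner (an open stable neighbourhood of the origin). Set $U^X_\alpha = \pi(U^M_\alpha)$ and let $\phi^X_\alpha$ be the induced homeomorphism onto $\pi_{\HH^n}(\phi^M_\alpha(U^M_\alpha)) \subset \RR^n_{\geq}$, so that the bottom, left and right arrows of the square are already defined and $\pi_{\HH^n} \circ \phi^M_\alpha = \phi^X_\alpha \circ \pi$ holds by construction.

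The square need not commute for this first choice of $\phi^M_\alpha$: we only know that $\phi^M_\alpha(s(p))$ and $i(\phi^X_\alpha(p))$ lie in the same $Q^n$-orbit of $\HH^n$, since both project to $\phi^X_\alpha(p)$ under $\pi_{\HH^n}$. The key step is to correct $\phi^M_\alpha$ by a continuous equivariant "gauge transformation" so that it carries $s(p)$ exactly to $i(\phi^X_\alpha(p))$. Concretely, for each $p \in U^X_\alpha$ there is $g(p) \in Q^n$ with $g(p) \cdot \phi^M_\alpha(s(p)) = i(\phi^X_\alpha(p))$; the ambiguity in $g(p)$ is exactly the isotropy group of $i(\phi^X_\alpha(p))$, which is the canonical isotropy subgroup $\hat\lambda(\tau(p))$ attached to the smallest face through $p$. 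I would use the section $s$ together with the explicit form of $i$ (which sends the interior of $\RR^n_{\geq}$ to the interior of $\HH^n$, where the action is free) to choose $g$ continuously on the open dense set over $\mathrm{int}(U^X_\alpha)$ and then check it extends continuously over the faces — here one uses that near a face the regular-corner model splits as a product of lower standard corners times a free factor, so the extension is governed by sections of trivial(ised) principal bundles over contractible pieces. Replacing $\phi^M_\alpha$ by $y \mapsto g(\pi(y)) \cdot \phi^M_\alpha(y)$ gives a new equivariant chart — still a regular chart, since precomposition by a continuous family of group elements preserves regularity of the corner structure — for which $\phi^M_\alpha \circ s = i \circ \phi^X_\alpha$, i.e. the square commutes.

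Finally, I would remark that the construction is carried out over a neighbourhood of each orbit and the resulting charts automatically cover $M^{4n}$ (every point lies over some $p \in X^n$ and in some $U^M_\alpha$), so the collection $(U^M_\alpha, \phi^M_\alpha)_{\alpha \in \mathcal{A}}$ is the required regular atlas; the commuting square is exactly the compatibility of $s$ and $i$ with the charts. The main obstacle is the middle step: showing that the gauge $g(p)$, defined only up to the face-dependent isotropy group $\hat\lambda(\tau(p))$, can be selected as a genuinely continuous (indeed equivariant-compatible) function on all of $U^X_\alpha$ including the stratified boundary. This is where contractibility of the faces of $X$ and the product structure of regular corners along faces are essential, and it is the analogue of the bundle-theoretic section-existence argument referenced from \cite{Yoshida}; everything else is bookkeeping with equivariant homeomorphisms.
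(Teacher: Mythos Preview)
Your approach is essentially the paper's: start from an arbitrary regular atlas $(U_\alpha^M,\psi_\alpha^M)$, pass to the induced atlas $(U_\alpha^X,\psi_\alpha^X)$ on $X$, define a gauge $\theta_\alpha$ (your $g$) by the orbit equation
\[
\theta_\alpha(b)\cdot s(b)=(\psi_\alpha^M)^{-1}\circ i\circ\psi_\alpha^X(b),
\]
and replace $\psi_\alpha^M$ by the twisted chart $\phi_\alpha^M(x):=\psi_\alpha^M(\theta_\alpha(\pi(x))\cdot x)$. Your post-composition twist $y\mapsto g(\pi(y))\cdot\psi_\alpha^M(y)$ is the same formula pushed through the equivariant chart.

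The one substantive divergence is in how the gauge is produced. The paper does not try to lift $g$ to an honest continuous map $U_\alpha^X\to Q^n$; it observes that the equation above determines $\theta_\alpha$ as a local section of the derived model $\mathcal{M}(\lambda)\to X$, so the face-dependent isotropy ambiguity you identify is absorbed into the equivalence relation defining $\mathcal{M}(\lambda)$, and the twisted chart is then written down directly. You instead propose to pick $g$ as a genuine $Q^n$-valued map on the interior and extend over the boundary strata, invoking ``contractibility of the faces of $X$''. That hypothesis is not available here: this Proposition is stated for an arbitrary quoric manifold over an arbitrary manifold with corners, and the homotopy-polytope assumption only enters later in Section~6. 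What you actually need (and also mention) is purely local---the product structure of the regular corner $\HH^n$ along each face---and that suffices. So your argument is correct once you drop the appeal to global contractibility of faces and rely solely on the local model, which is also how the paper (following \cite{Yoshida}) organises it.
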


\begin{proof}
The idea of the proof is first to construct an atlas of $X$ as orbits of a regular atlas on $M$. For the open sets of the atlas of $X$, we construct local sections of the corresponding 
derived model projection. We define the new atlas of $M$ with the desired properties using the original atlas, after twisting it with local sections. We provide with
 the explicit constructions for the reader's convenience.

Let  $(U_{\alpha}^M,\psi_{\alpha}^M)$ be a regular atlas of $M^{4n}$. First, we construct an atlas of $X^n$ as follows. We put $U_{\alpha}^X:=U_{\alpha}^M/Q^n$. Now, we restrict $\pi_{\HH^n}$ to $\psi_{\alpha}^M(U_{\alpha}^M)$, which induces a homeomorphism from $U_{\alpha}^X$ to the open subset $\pi_{\HH^n}(\psi_{\alpha}^M(U_{\alpha}^M)$ of $\RR^n_{+}$, which is denoted by $\psi^X_{\alpha}(U_{\alpha}^X)$. Thus $(U_{\alpha}^X,\psi_{\alpha}^X)$ is the atlas of $X^n$ induced by $(U_{\alpha}^M,\psi_{\alpha}^M)$. By construction, this means that the following diagram commutes\\
\begin{center}
\begin{tikzcd}
U_{\alpha}^M\subset M^{4n} \arrow[r, "\psi_{\alpha}^M"] \arrow[d, "\pi"]
& \psi_{\alpha}^M(U_{\alpha}^M)\subset \HH^n \arrow[d, "\pi_{\HH^n}"] \\
U_{\alpha}^X \subset X^n\arrow[r, "\psi_{\alpha}^X" ] 
& \psi_{\alpha}^X(U_{\alpha}^X)\subset \RR^n_{\geq} 
\end{tikzcd}
\end{center}

For each $\alpha\in\mathcal{A}$ and $b\in U_{\alpha}^X$, we have that $\pi\circ (\psi_a^M)^{-1}=(\psi_a^X)^{-1}\circ \pi_{\HH^n}$. Then $\pi\circ (\psi_a^M)^{-1} \circ i=(\psi_a^X)^{-1}\circ \pi_{\HH^n}\circ i=(\psi_a^X)^{-1}$. 
Thus for $b\in U_{\alpha}^X$, $\pi\circ (\psi_a^M)^{-1} \circ i\circ\psi_a^X(b) = b = \pi s(b)$. 
The equality 
$$\theta_{\alpha}(b)\cdot s(b)=(\psi_{\alpha}^M)^{-1}\circ i \circ \psi_{\alpha}^X (b)$$
for $b\in U_{\alpha}^X$ determines a local section in the derived quoric manifold $\mathcal{M}(\lambda)$, $\theta_{\alpha}:U_{\alpha}^X\to Q^n$.

Now, as in \cite[Proposition 5.1]{Yoshida}, we define a new coordinate system $\phi_{\alpha}^M$ on $U_{\alpha}^M$ by
$$\phi_{\alpha}^M(x):=\psi_{\alpha}^M(\theta_{\alpha}(\pi(x))\cdot x)$$
for $x\in \pi^{-1}(U_{\alpha}^X)$, which satisfies $\phi_{\alpha}^M(s(b))=\psi_{\alpha}^M(\theta_{\alpha}(b)\cdot s(b))=\psi_{\alpha}^M((\psi_{\alpha}^M)^{-1}\circ i\circ \psi_{\alpha}^X(b))=i\circ \psi_{\alpha}^X(b)$. In other words, the following diagram commutes
\begin{center}
\begin{tikzcd}
U_{\alpha}^M \arrow[r, "\phi_{\alpha}^M"] 
& \phi_{\alpha}^M(U_{\alpha}^M)  \\
U_{\alpha}^X \arrow[r, "\phi_{\alpha}^X" ] \arrow[u, "s"]
& \phi_{\alpha}^X(U_{\alpha}^X) \arrow[u, "i"]
\end{tikzcd}
\end{center}
where $\phi_{\alpha}^X=\psi_{\alpha}^X$. Therefore $(U_{\alpha}^M,\phi_{\alpha}^M)$ is the required regular atlas.
\end{proof}

As a consequence of the construction above, on each nonempty overlap $U_{\alpha\beta}^X=U_{\alpha}^X\cap U_{\beta}^X$, we define a family of functions $\check{\theta} =\lbrace \theta_{\alpha\beta}\rbrace$, which we will show that it forms a $\check{C}$ech one - chain on $\lbrace U_{\alpha}^X \rbrace$. For each quoric manifold $M^{4n}$, we have from the definition \ref{def:quoric} an open cover of $M^{4n}$: $\mathcal{U}=\lbrace U_{\alpha}^M \rbrace$. We assume that the index set $\mathcal{A}$, of the regular atlas, is countably ordered. Let $\pi(U_{\alpha}^M)=U_{\alpha}^X$ the induced open subset of $X$.
By Proposition \ref{prop:hom2section} there exist homeomorphisms $h_{\alpha}, h_{\beta}$ such that
$$h_{i}:U_{i}^X\times Q^n/\sim\to U_{i}^M, \quad (q,x) \mapsto q\cdot s_{i}(x), \quad i = \alpha, \beta$$

So for $x\in\pi^{-1}(U_{\alpha\beta}^X)$, $x$ and $h_{\alpha}\circ h_{\beta}^{-1}(x)$ belong to the same orbit. As in the proof of Proposition \ref{prop:construction theta}, this means that for any $b\in U_{\alpha\beta}^X$ the equation
$$h_{\alpha}\circ h_{\beta}^{-1}(x)=\theta_{\alpha\beta}^M(b)x, \quad \theta_{\alpha\beta}^M(b)\in Q^n, $$
determines a local section $\theta_{\alpha\beta}^M$ of $\pi: \mathcal{M}(\lambda)\to X^n$ on $U_{\alpha\beta}^X$.

Now, let $\mathcal{S}$ denote the sheaf of local sections of $\pi : \mathcal{M}(\lambda)\to X^n$. Then local sections $\theta_{\alpha\beta}^M $ form a $\check{C}$ech one - chain $\check{\theta}$ on $U_{\alpha}^X$ with values in $\mathcal{S}$. 

Notice that $\lbrace \theta_{\alpha\beta}^M \rbrace$ is a cocycle: Note that $\lbrace \theta_{\alpha\beta}^M \rbrace$ must satisfy some compatibility conditions on every intersection, so-called $\check{C}$ech cocycle conditions \cite{LawsonMichelson}, namely $\theta_{\alpha\alpha}(b)x=Id(x)$ and $\theta_{\alpha\beta}\theta_{\beta\gamma}\theta_{\gamma\alpha}(b)x=Id(x)$ on $U_{\alpha\beta\gamma}=U_{\alpha}\cap U_{\beta}\cap U_{\gamma}$. Both conditions are straightforward calculations using the relation $h_{\alpha}\circ h_{\beta}^{-1}(x)=\theta_{\alpha\beta}^M(b)x$. 

Let $\check{H^1}(X^n,\mathcal{S})$ denote the first $\check{C}$ech cohomology group of $X^n$ with values in $\mathcal{S}$. Since $\lbrace \theta_{\alpha\beta}^M \rbrace$ is a cocycle, it defines a cohomology class in $\check{H^1}(X^n,\mathcal{S})$. We denote it by $e(M)$.

\begin{definition}
We call $e(M)$ the \emph{Euler class} of $\pi: M^{4n}\to X^n$.
\end{definition}

\begin{theorem} \label{thm:EulerClass}
The orbit map $\pi : M^{4n}\to X^n$ has a section if and only if $e(M)$ vanishes. 
\end{theorem}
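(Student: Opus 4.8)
The plan is to reduce the statement to the standard dictionary between principal-type bundles (here, orbit maps whose fibers are the group $Q^n$ quotiented by varying isotropy) and their classifying \v{C}ech cohomology, exactly as in the construction preceding the theorem. Concretely, I would prove the two implications separately, both by unwinding the definition of $e(M)$ in terms of the local sections $\theta^M_{\alpha\beta}$ built from the local trivializations $h_\alpha: U^X_\alpha\times Q^n/\!\sim\ \to U^M_\alpha$ supplied by Proposition \ref{prop:hom2section}.

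For the forward direction, suppose $\pi:M^{4n}\to X^n$ admits a global section $s:X^n\to M^{4n}$. Then on each $U^X_\alpha$ the two local sections $s|_{U^X_\alpha}$ and $s_\alpha:=h_\alpha(1,-)$ differ by an element of the fiber, i.e. there is a local section $\mu_\alpha$ of $\mathcal{S}$ on $U^X_\alpha$ with $s(b)=\mu_\alpha(b)\cdot s_\alpha(b)$ for all $b\in U^X_\alpha$. A direct computation with the defining relation $h_\alpha\circ h_\beta^{-1}(x)=\theta^M_{\alpha\beta}(b)\,x$ shows that on $U^X_{\alpha\beta}$ we get $\theta^M_{\alpha\beta}=\mu_\alpha\,\mu_\beta^{-1}$ (up to the order conventions forced by the possibly noncommutative $Q^n$, which is the one place requiring care), so the cocycle $\check\theta=\{\theta^M_{\alpha\beta}\}$ is a \v{C}ech coboundary and hence $e(M)=0$ in $\check H^1(X^n,\mathcal{S})$.

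Conversely, suppose $e(M)=0$. Then, after passing to a refinement of the cover if necessary, there exist local sections $\mu_\alpha$ of $\mathcal{S}$ over $U^X_\alpha$ with $\theta^M_{\alpha\beta}=\mu_\alpha\mu_\beta^{-1}$ on every overlap. Define $s_\alpha(b):=\mu_\alpha(b)^{-1}\cdot h_\alpha(1,b)$ as a local section of $\pi$ over $U^X_\alpha$; the cocycle identity guarantees $s_\alpha=s_\beta$ on $U^X_{\alpha\beta}$, so these patch to a global section $s:X^n\to M^{4n}$. Here one must check that the sheaf $\mathcal{S}$ is genuinely a sheaf of sets with the right gluing, and that $Q^n$ acting on $\mathcal{S}$ via translation is the correct action making $\{\theta^M_{\alpha\beta}\}$ a cocycle representing $e(M)$ — this is formal once the conventions are fixed, but in the noncommutative setting $\mathcal{S}$ is not a sheaf of abelian groups, so ``$\check H^1$'' must be interpreted as nonabelian \v{C}ech cohomology (pointed set), and ``vanishes'' means ``equals the distinguished basepoint''. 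Since the paper has in mind contractible (homotopy polytope) bases this subtlety does not actually obstruct anything: the statement is still a clean iff.

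The main obstacle is bookkeeping the noncommutativity of $Q^n$: one must decide once and for all whether local sections are compared on the left or the right, and verify that the twisting $\phi^M_\alpha(x)=\psi^M_\alpha(\theta_\alpha(\pi(x))\cdot x)$ from Proposition \ref{prop:construction theta} together with the overlap relation $h_\alpha\circ h_\beta^{-1}(x)=\theta^M_{\alpha\beta}(b)x$ yields a genuine (nonabelian) $1$-cocycle rather than a twisted one. Once the convention is pinned down, the equivalence ``section exists $\iff$ the class is trivial'' is the standard classifying-space argument transported to the \v{C}ech picture, and I would present it as such, citing \cite{Yoshida} for the parallel statement in the toric case and \cite{LawsonMichelson} for the cocycle formalism.
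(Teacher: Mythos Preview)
Your argument is correct and is the standard ``transition-cocycle is a coboundary iff there is a global section'' computation, carried out directly on the fibers of $\pi$. The paper takes a slightly different route: rather than patching local sections directly, it factors the statement through Proposition~\ref{prop:hom2section}. Namely, inside the proof the authors show that $e(M)=0$ is equivalent to the existence of a globally defined $Q^n$-homeomorphism $h:M^{4n}\to\mathcal{M}(\lambda)$ (by checking that the $0$-cochain $\{\theta_\alpha\}$ witnessing $\delta\theta=\check\theta$ makes the locally defined maps $x\mapsto h_\alpha^{-1}(\theta_\alpha(b)x)$ agree on overlaps), and then invoke Proposition~\ref{prop:hom2section} twice to pass between ``$Q^n$-homeomorphic to $\mathcal{M}(\lambda)$'' and ``$\pi$ has a section.'' Your direct approach avoids the detour through the canonical model and is arguably cleaner for this particular statement; the paper's route has the advantage that it simultaneously establishes the triple equivalence recorded in the corollary immediately following (section $\Leftrightarrow$ $Q^n$-homeomorphic to $\mathcal{M}(\lambda)$ $\Leftrightarrow$ $e(M)=0$), which is what is actually used downstream. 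Your remarks about the nonabelian nature of $\check H^1(X^n,\mathcal{S})$ are well taken and in fact more careful than the paper itself, which writes ``cohomology group'' without comment.
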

\begin{proof}
$e(M)$ vanishes means that a representative $\check{\theta}\in e(M)$ is cohomologous to the zero cocycle. So there exists $0$ - cocycle $\theta$, such that $\delta\theta=\check{\theta}$. But, here, cocycles $\theta\in \check{H^0}(X^n,\mathcal{S})$ are cochains $\lbrace \theta_{\alpha} \rbrace$ which satisfy 
\begin{equation} \label{eq:theta}
\theta_{\alpha}=\check{\theta}_{\alpha\beta}\theta_{\beta}
\end{equation}
for all $x\in U_{\alpha\beta}$. The equation (\ref{eq:theta}) becomes
$$\theta_{\alpha}(b)x=\theta_{\alpha\beta}^M\theta_{\beta}(b)x=h_{\alpha}\circ h_{\beta}^{-1}(x)\theta_{\beta}(b)x \Leftrightarrow h_{\alpha}^{-1}(\theta_{\alpha}(b)x)=h_{\beta}^{-1}(\theta_{\beta}(b)x)$$
for any $b\in U_{\alpha\beta}^X$, where we used the fact that $h_{\alpha}$ and $h_{\beta}$ are equivariant maps. In other words, $h_{\alpha}$ and $h_{\beta}$ agree on each non empty overlap $U_{\alpha\beta}$. This means that the map $h: M^{4n}\to \mathcal{M}(\lambda)$ defined by $h(x)=h_{\alpha}^{-1}(\theta_{\alpha}(b)x)$ for $x\in U_{\alpha}^M$ and $\pi(x)=b\in U_{\alpha}^X$ is $Q^n$-homeomorphism if and only if $e(M)$ vanishes.

So if the orbit map $\pi : M^{4n}\to X^n$ has a section, by Proposition \ref{prop:hom2section} $M^{4n}$ and $\mathcal{M}(\lambda)$ are $Q^n$-homeomorphic and thus $e(M)=0$.

On the other hand, if the Euler class $e(M)=0$ then we have that $M^{4n}$ and $\mathcal{M}(\lambda)$ are $Q^n$-homeomorphic. Again by Proposition \ref{prop:hom2section} the orbit map $\pi : M^{4n}\to X^n$ has a section. This completes the proof.
\end{proof}

\begin{corollary}
Let $M^{4n}$ be a quoric manifold over $X^n$ and $\mathcal{M}(\lambda)$ the canonical model associated to the action. Then the following are equivalent:
\begin{itemize}
\item[1.] There is a $Q^n$-homeomorphism $h:M^{4n}\to \mathcal{M}(\lambda)$.
\item[2.] The orbit map $\pi : M^{4n}\to X^n$ admits a section.
\item[3.] The Euler class $e(M^{4n})\in\check{H^1}(X^n,\mathcal{S})$ vanishes.
\end{itemize}
\end{corollary}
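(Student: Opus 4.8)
The plan is to derive the Corollary as a formal assembly of the two results just established, with no new geometric input: Proposition \ref{prop:hom2section} already gives the equivalence $1 \Leftrightarrow 2$, and Theorem \ref{thm:EulerClass} already gives $2 \Leftrightarrow 3$, so chaining the two yields the three-way equivalence. Concretely, for $1 \Rightarrow 2$ I would take the $Q^n$-homeomorphism $h : M^{4n} \to \mathcal{M}(\lambda)$ and set $s = h^{-1}\circ s'$, where $s'$ is the tautological section $p \mapsto [1,p]$ of $\pi'$ from Lemma \ref{lemma:Sections1}; for $2 \Rightarrow 1$ I would send a section $s$ of $\pi$ to the map $(q,x) \mapsto q\cdot s(x)$, which Proposition \ref{prop:hom2section} identifies as a $Q^n$-homeomorphism $\mathcal{M}(\lambda) \to M^{4n}$. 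The equivalence $2 \Leftrightarrow 3$ is literally the content of Theorem \ref{thm:EulerClass}. Concatenating the arrows $1 \Rightarrow 2 \Rightarrow 3 \Rightarrow 2 \Rightarrow 1$ closes the loop.

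One should check that the two passages $1 \leftrightarrow 2$ and $2 \leftrightarrow 3$ are compatible, so that the cycle of implications genuinely closes; but this is automatic, since in both Proposition \ref{prop:hom2section} and Theorem \ref{thm:EulerClass} the homeomorphism attached to a section $s$ is produced by the same formula $(q,x) \mapsto q\cdot s(x)$, and there is no circular dependence to worry about because Theorem \ref{thm:EulerClass} is itself proved by invoking Proposition \ref{prop:hom2section} in one direction and an explicit gluing of the charts $h_\alpha$ in the other.

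Finally, I would append the remark that makes the Corollary useful downstream: when $X^n$ is a homotopy polytope — in particular contractible — the orbit map $\pi$ always admits a section (the obstruction $e(M^{4n}) \in \check{H}^1(X^n,\mathcal{S})$ vanishes over a contractible base, a section being built by induction over the skeleta of $X^n$ using that the generic fiber $Q^n$ is connected), so all three conditions hold unconditionally and every quoric manifold over a homotopy polytope is $Q^n$-homeomorphic to its canonical model $\mathcal{M}(\lambda)$. There is no substantive obstacle in the Corollary itself — all the work sits in Proposition \ref{prop:hom2section} and Theorem \ref{thm:EulerClass} — so the only thing to be careful about is the bookkeeping: presenting the cycle of implications in an order that does not silently reuse the conclusion being proved.
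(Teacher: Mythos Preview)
Your proposal is correct and matches the paper's approach exactly: the Corollary is stated there without proof, as an immediate consequence of Proposition \ref{prop:hom2section} (for $1 \Leftrightarrow 2$) and Theorem \ref{thm:EulerClass} (for $2 \Leftrightarrow 3$). Your appended remark about contractible bases is not part of this Corollary in the paper but is recorded separately as Corollary \ref{cor:topclas2}, so you should keep it out of the present proof.
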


Now, we show that $e(M)$ has certain homotopy invariance properties. Intuitively, having quoric manifolds that have the same homotopy type, means that their cocycles are cohomologous. This comes to the same thing as showing that the induced homomorphisms are the same. The homotopy invariance theorem can be stated as follows.

\begin{theorem} \label{thm:HomInvarCech}
(Homotopy Invariance of $\check{C}$ech Cohomoly) Let $F: Y{\times}I \to Y$ be a homotopy between $f$ and $g$.
% $f$ and $g$ be maps of $Y$ into $Y\times I$ defined by setting $f(x)=(x,0)$ and $g(x)=(x,1)$ for all $x\in Y$. 
Then the induced homomorphisms $f_*$ and $g_*$ of $\check{\mathrm{H}}^p(Y)$ into $\check{\mathrm{H}}^p(Y\times I)$ are equal for each $p$.
\end{theorem}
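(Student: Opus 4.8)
The plan is to prove the homotopy invariance of \v{C}ech cohomology by the standard technique used for singular and \v{C}ech theories: show that the two inclusions $i_0,i_1:Y\hookrightarrow Y\times I$, given by $y\mapsto(y,0)$ and $y\mapsto(y,1)$, induce the same map on $\check{\mathrm{H}}^p$, and then compose with $F$. Since $F\circ i_0=f$ and $F\circ i_1=g$, functoriality of $\check{\mathrm{H}}^p$ gives $f_*=i_0^*\circ F^*=i_1^*\circ F^*=g_*$ once we know $i_0^*=i_1^*$. So the whole content is the claim that $i_0$ and $i_1$ induce the same homomorphism $\check{\mathrm{H}}^p(Y\times I)\to\check{\mathrm{H}}^p(Y)$; equivalently, that the projection $p:Y\times I\to Y$ induces an isomorphism on $\check{\mathrm{H}}^p$ with $i_0^*$ and $i_1^*$ both inverse to $p^*$.

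First I would recall that $\check{\mathrm{H}}^p(Y)=\varinjlim_{\mathcal{U}}H^p(N(\mathcal{U}))$, the direct limit over open covers $\mathcal{U}$ of $Y$ (ordered by refinement) of the simplicial cohomology of the nerve $N(\mathcal{U})$. Given an open cover $\mathcal{V}=\{V_j\}$ of $Y\times I$, the key geometric input is a tube-lemma/Lebesgue-number argument: there is an open cover $\mathcal{U}=\{U_\alpha\}$ of $Y$ such that the cylindrical cover $\{U_\alpha\times I\}$ refines $\mathcal{V}$. This reduces the problem to comparing nerves of the form $N(\mathcal{U})$ and $N(\{U_\alpha\times I\})$, which are canonically isomorphic simplicial complexes (a finite subcollection of the $U_\alpha$ has nonempty intersection iff the corresponding $U_\alpha\times I$ do). Under this identification both $i_0$ and $i_1$ induce, on the level of cofinal systems of covers, the same simplicial map — namely the identity of $N(\mathcal{U})$ — because the cover $\mathcal{U}$ simultaneously refines $i_0^{-1}(\mathcal{V})$ and $i_1^{-1}(\mathcal{V})$. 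Passing to the direct limit yields $i_0^*=i_1^*$ on $\check{\mathrm{H}}^p(Y\times I)$.

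The main obstacle I anticipate is the bookkeeping needed to make the limit argument precise: one must exhibit, for every cover $\mathcal{V}$ of $Y\times I$, a single cover $\mathcal{U}$ of $Y$ whose cylinder refines $\mathcal{V}$, and check that the induced maps on nerves are compatible with the refinement maps so that they glue to a well-defined map on the colimit. The tube lemma supplies the cover $\mathcal{U}$ pointwise — for each $y$, finitely many strips $\{y\}\times I$ is covered, shrink to an open $U_y\ni y$ with $U_y\times I$ inside the union — but one then needs a Lebesgue-number or paracompactness argument (valid here since $Y$, being a manifold with corners or at worst a reasonable space in our setting, is paracompact) to pass to a locally finite such $\mathcal{U}$ and to verify the chain-homotopy/cofinality statement. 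A clean alternative, which I would mention, is to invoke the general acyclic-models or chain-homotopy argument: the two chain maps $i_0^\#,i_1^\#$ on the inverse system of simplicial cochain complexes are connected by a \v{C}ech-level prism operator built exactly as in the singular case (subdividing $\sigma\times I$), and the prism operator is compatible with refinement; hence the induced maps agree in the limit. I would then close by noting $f_*=g_*$ follows immediately, completing the proof.
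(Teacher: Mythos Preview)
The paper does not actually prove this theorem: it simply states that the proof ``is omitted since it bears resemblances to the results that appear in [Wallace].'' So there is nothing to compare against beyond the reference to a standard textbook argument, and your outline is precisely the standard route such a textbook takes: reduce to $i_0^*=i_1^*$ on $\check{\mathrm{H}}^p(Y\times I)$ and then compose with $F^*$.

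One technical point in your primary argument deserves correction. You claim that for any open cover $\mathcal{V}$ of $Y\times I$ there is a cover $\mathcal{U}$ of $Y$ such that the cylindrical cover $\{U_\alpha\times I\}$ \emph{refines} $\mathcal{V}$, i.e.\ each $U_\alpha\times I$ lies in a single $V_j$. This is false in general: take $\mathcal{V}$ to consist of horizontal slabs $Y\times(a,b)$ with $b-a<1$. What the tube lemma gives you (and what you write in the parenthetical) is only that $U_y\times I$ lies in a finite \emph{union} of members of $\mathcal{V}$, which is not refinement. The cofinal family one actually uses consists of ``stacked'' covers $\{U_\alpha\times J_\beta\}$ with $J_\beta$ subintervals of $I$; the nerves of these are not identical to $N(\mathcal{U})$, and it is exactly here that the prism/chain-homotopy operator you mention as an alternative becomes the essential ingredient rather than merely a cleaner option. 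With that adjustment your proof goes through and matches what one finds in Wallace.
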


The proof is omitted since it bears resemblances to the results that appear in \cite{Wallace}.

\begin{corollary} \label{cor:topclas2}
If $X$ is contractible then the orbit map $\pi$ admits a section and thus $M^{4n}$ is $Q^n$-homeomorphic to $\mathcal{M}(\lambda)$.
\end{corollary}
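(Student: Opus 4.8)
The plan is to read the statement off the obstruction theory just developed. By Theorem~\ref{thm:EulerClass} the orbit map $\pi\colon M^{4n}\to X^n$ admits a section precisely when its Euler class $e(M)\in\check H^1(X^n,\mathcal{S})$ is trivial, and by Proposition~\ref{prop:hom2section} the existence of such a section is equivalent to the existence of a $Q^n$-homeomorphism $M^{4n}\cong\mathcal{M}(\lambda)$. Hence the whole corollary reduces to the single assertion that $\check H^1(X^n,\mathcal{S})$ contains only the trivial class when $X^n$ is contractible; in fact it suffices to prove the a priori weaker fact that the class $e(M)$ itself is trivial.

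For this I would invoke the homotopy invariance of \v{C}ech cohomology (Theorem~\ref{thm:HomInvarCech}). Since $X^n$ is contractible, the identity map $\mathrm{id}_{X^n}$ is homotopic to a constant map $c_{x_0}\colon X^n\to\{x_0\}\hookrightarrow X^n$. Homotopic maps induce the same map on $\check H^1(X^n,\mathcal{S})$; but $(\mathrm{id}_{X^n})^*$ is the identity of $\check H^1(X^n,\mathcal{S})$, while $c_{x_0}^*$ factors through $\check H^1(\{x_0\},\mathcal{S}|_{x_0})$, which is trivial because the one-point space has only the trivial open cover and hence no nontrivial \v{C}ech $1$-cocycles. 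Therefore the identity of $\check H^1(X^n,\mathcal{S})$ is the zero map, so $\check H^1(X^n,\mathcal{S})=0$ and in particular $e(M)=0$. Applying Theorem~\ref{thm:EulerClass} produces a section $s\colon X^n\to M^{4n}$ of $\pi$, and then Proposition~\ref{prop:hom2section} (equivalently, the corollary following Theorem~\ref{thm:EulerClass}) yields the desired $Q^n$-homeomorphism $M^{4n}\cong\mathcal{M}(\lambda)$, namely $(q,x)\mapsto q\cdot s(x)$.

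The point that requires care — and which I expect to be the real content behind the one-line argument above — is the legitimacy of applying homotopy invariance to the coefficient system $\mathcal{S}$. Here $\mathcal{S}$ is the sheaf of local sections of $\pi'\colon\mathcal{M}(\lambda)\to X^n$, whose fibres are the $Q^n$-orbits $Q^n/\hat{\lambda}(\tau(x))$; this is neither a constant sheaf nor, because $Q^n$ is non-abelian, a sheaf of abelian groups, so $\check H^1(X^n,\mathcal{S})$ should strictly be read as a pointed set and ``vanishing'' as ``reduced to the basepoint class''. One therefore needs the homotopy-invariance statement in the concrete form that a homotopy $F\colon X^n\times I\to X^n$ between $f$ and $g$ carries a cocycle representing a class to a cohomologous one — the classical argument (as in \cite{Wallace}) via a cover of $X^n\times I$ subordinate to $F$ uses only that $\mathcal{S}$ is a sheaf and that its sections transport along $F$, so it goes through, but this verification is where the actual work lies. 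Alternatively one can bypass the general theorem and construct the section of $\pi$ directly, extending it over the skeleta $X^{(0)}\subset X^{(1)}\subset\cdots$ of $X^n$ and using contractibility to kill the successive obstructions, exactly as in the torus case of \cite{mp}.
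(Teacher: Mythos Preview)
Your proposal is correct and follows essentially the same route as the paper: the paper's proof is the single line ``Since $X$ is contractible, $\check{H}^1(X,\mathcal{S})=0$. The result follows.'' You have simply unpacked this line, supplying the standard constant-map argument via Theorem~\ref{thm:HomInvarCech} and then invoking Theorem~\ref{thm:EulerClass} and Proposition~\ref{prop:hom2section}. Your added caveat about $\mathcal{S}$ being non-abelian (so that $\check{H}^1$ is a pointed set) is a legitimate technical point that the paper does not spell out, but it does not change the argument.
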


\begin{proof}
Since $X$ is contractible, $\check{\mathrm{H}}^1(X, \mathcal{S}) = 0$. The result follows.
\end{proof}

\begin{remark}
\begin{enumerate}
\item The corollary for quasitoric manifolds was proved in \cite[Proposition 1.8]{dj} (also \cite[Lemma 4.5]{mp2006}).
\item The proof of Corollary \ref{cor:topclas2} is very general. It is possible that there is a more direct proof.
\end{enumerate}
\end{remark}

\section{On $Q^n$-representations with standard isotropy subgroups}
In this section, we consider $Q = SU(2)$ and study $Q^n$-representations whose isotropy subgroups are \emph{standard}:

\begin{definition}\label{def:standard_isotropies}
Let $\rho$ be a representation of $Q^n$. An isotropy subgroup of $\rho$ is \emph{standard} if it is isomorphic to $Q^m$ for some $1\leq m\leq n-1$. It is \emph{maximal} if $m=n-1$.
\end{definition}

The complex representation theory of $Q = SU(2)$ is classical (see \cite{tomDieck,ItzRoStr1990,Hall}). All irreducible complex representations are either real or quaternionic \cite[p.286]{ItzRoStr1990}; odd-dimensional ones are of real type, even-dimensional ones quaternionic \cite[p.288]{ItzRoStr1990}. These correspond to structure maps discussed in \cite[Section II.6]{tomDieck}.

Let $K = \RR$, $\CC$, or $\HH$, and write $Rep(Q,K)$ for $Q$-representations over $K$, and $Irr(Q,K)$ for the irreducibles. For real representations, there is a canonical decomposition \cite[p.291]{ItzRoStr1990}:
\[
Rep(Q^n,\RR) = Rep_+(Q^n,\CC) \sqcup Rep_-(Q^n,\CC),
\]
where $Rep_+$ and $Rep_-$ are complexifications of real type and realifications of quaternionic type, respectively. For both cases see details in \cite[p.94]{tomDieck}. The isotropy groups coincide between real and corresponding complex forms. 

\begin{lemma}\label{lemma:Iso(p)=Iso(p_CC)}
Let $\rho \in Rep(Q^n,\RR)$. If $\rho$ arises from $\rho^+ \in Rep_+(Q^n,\CC)$, then $Iso(\rho) = Iso(\rho^+)$; likewise, if from $\rho_{\RR} \in Rep_-(Q^n,\CC)$, then $Iso(\rho) = Iso(\rho_{\RR})$.
\end{lemma}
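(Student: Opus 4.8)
The plan is to reduce the claim to the elementary observation that for any representation $\rho$ of a compact Lie group $G$ over $K = \RR, \CC$, the isotropy subgroup at a vector is unchanged by the standard base-change operations (complexification, realification, restriction/extension of scalars), because these operations act on underlying real vector spaces and $G$-sets in a way that only enlarges or shrinks the ambient space by adding $G$-equivariantly matched copies. First I would fix notation: write $\rho$ as a finite sum of irreducibles over $\RR$, and recall from the cited decomposition $Rep(Q^n,\RR) = Rep_+(Q^n,\CC)\sqcup Rep_-(Q^n,\CC)$ that each real irreducible of $Q^n$ either is the underlying real representation of a complex irreducible of real type (coming from $\rho^+$) or is the realification $r_{\CC}^{\HH}$ of a complex irreducible of quaternionic type (coming from $\rho_{\RR}$). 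So it suffices to handle a single irreducible summand in each of the two cases, and then observe that $Iso$ of a direct sum is the intersection of the $Iso$'s of the summands, so the identity passes to arbitrary $\rho$.

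The two cases are then routine. In the real-type case, $\rho$ is obtained from $\rho^+ \in Rep_+(Q^n,\CC)$ by restriction of scalars (viewing a complex vector space as a real one of twice the dimension), and for $v \in V_{\rho^+}$ the condition $g\cdot v = v$ is literally the same equation whether one reads it in $V_{\rho^+}$ or in its underlying real space; moreover complexification and realification are mutually inverse up to the natural $G$-isomorphism $V \otimes_{\RR} \CC \cong V \oplus V$ on real-type pieces, and stabilizers of a vector equal the intersection of the stabilizers of its components under such a decomposition, so no new isotropy appears. In the quaternionic-type case, $\rho_{\RR}$ is already the realification of $\rho$, i.e. $V_\rho$ is $V_{\rho_{\RR}}$ with a compatible complex structure forgotten (or, dually, $V_{\rho_{\RR}}$ is $V_\rho$ viewed over $\RR$); again $g\cdot v = v$ is the same equation in both, so $Iso(\rho) = Iso(\rho_{\RR})$ pointwise, hence as a collection of subgroups. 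In both cases one should note that the relevant map on vectors is an $\RR$-linear $G$-equivariant bijection (when complexification/realification is set up as the identity on underlying real spaces) or at worst a $G$-equivariant embedding onto a summand, and for such maps $Iso$ is manifestly preserved.

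The only genuine subtlety — and the step I expect to require the most care — is bookkeeping about what ``the same representation, different field'' means at the level of $Iso(\cdot)$ as a \emph{set} of conjugacy classes of subgroups, rather than a stabilizer at one point. One must check that the bijection between orbits (not just a single vector's stabilizer) is respected, i.e. that every isotropy group occurring for $\rho$ occurs for $\rho^+$ (resp. $\rho_{\RR}$) and vice versa. This follows because the base-change map is a $G$-equivariant bijection on underlying real points in the real-type case (so orbits correspond exactly), and in the quaternionic-type case $V_{\rho_{\RR}}$ and $V_\rho$ have literally the same underlying set with the same $G$-action, only the scalar action of $\CC$ versus $\HH$ differs — which is irrelevant to isotropy under $G = Q^n$. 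Thus $Iso(\rho)$ and $Iso(\rho^+)$ (resp. $Iso(\rho_{\RR})$) are equal as posets of isotropy classes, which is the assertion of the lemma. I would close by remarking that the compatibility with direct sums, $Iso\big(\bigoplus_i \rho_i\big) = \{\,\bigcap_i H_i : H_i \in Iso(\rho_i)\,\}$ suitably interpreted, lets one assemble the irreducible-by-irreducible statement into the general one stated.
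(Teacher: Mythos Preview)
The paper does not supply a proof of this lemma; it is stated as evident from the structure theory cited from \cite{ItzRoStr1990} and \cite{tomDieck}. Your argument is sound in outline and certainly adequate for the quaternionic-type case, where $\rho$ and $\rho_{\RR}$ share the same underlying real $Q^n$-set (only the scalar structure differs), so their isotropy subgroups coincide tautologically.

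In the real-type case your description is slightly off and leaves a small gap. The real representation $\rho$ is \emph{not} the restriction of scalars of $\rho^+$; rather $\rho^+ \cong \rho \otimes_{\RR} \CC$, so as real $Q^n$-modules $V_{\rho^+} \cong V_\rho \oplus V_\rho$, with $V_\rho$ embedded as a proper real subspace (the fixed points of the real structure). You correctly observe that the stabilizer of $u + iv \in V_{\rho^+}$ (with $u,v \in V_\rho$) is $G_u \cap G_v$, but then ``no new isotropy appears'' amounts to the claim that $Iso(V_\rho)$ is closed under pairwise intersection. This is true --- given $G_u = H$ and $G_v = K$, a generic real scalar $t$ yields $G_{u+tv} = H \cap K$, since a linear representation of a compact Lie group has only finitely many isotropy types and each strictly larger isotropy type is attained for at most one value of $t$ --- but it is the one non-tautological step in the argument and deserves to be stated explicitly rather than folded into a parenthetical. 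With that sentence added, your proof is complete and is exactly the kind of routine verification the paper is implicitly invoking.
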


We next examine isotropy in higher-dimensional irreducibles. The following Lemma is obvious.

\begin{lemma}\label{lemma:not_standard_isotropies}
    Let $\rho$ be an irreducible complex representation of $Q$ with $dim_{\CC}\rho=m+1>2$, then 
    \begin{itemize}
        \item[1.] the matrices that have $z=z_1^m, m>1$ as an eigenvector are of the form 
$$A=\left\{\begin{pmatrix}
a & 0\\
0 & \bar{a}
\end{pmatrix}, a=e^{\frac{ik\pi}{n}}, k\in\ZZ\right\} $$
with eigenvalue $e^{in\theta}$
        \item[2.] the non-trivial element $z=z_1^m$ has isotropy subgroup isomorphic to $\ZZ_m$.
     \end{itemize}
\end{lemma}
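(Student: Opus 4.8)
The plan is to read off everything from the classical description of $Irr(Q,\CC)$ already recalled in this section: an irreducible complex $Q$-representation of complex dimension $m+1$ is the symmetric power $V_m=\mathrm{Sym}^{m}(\CC^{2})$ of the standard module $\CC^{2}$, with monomial basis $z_1^{j}z_2^{m-j}$, $0\le j\le m$, on which the maximal torus $T=\{\mathrm{diag}(a,\bar a):|a|=1\}\subset Q$ acts diagonally with one-dimensional, pairwise distinct weight spaces; the hypothesis $\dim_{\CC}\rho=m+1>2$ is just $m\ge 2$, and $z=z_1^{m}$ is the highest weight vector. So first I would fix this model and these coordinates once and for all. (The index written $n$ in the displayed formula of the statement is this degree, which I keep calling $m$.)

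For (1), the induced action satisfies $A\cdot z_1^{m}=(A\cdot z_1)^{m}$, so $z_1^{m}$ is an eigenvector of $A$ on $V_m$ precisely when the line $\CC z_1\subset\CC^{2}$ is $A$-invariant. The only non-formal step is the implication $(A\cdot z_1)^{m}\in\CC z_1^{m}\Rightarrow A\cdot z_1\in\CC z_1$: writing $A\cdot z_1=\alpha z_1+\beta z_2$, the coefficient of $z_1^{m-1}z_2$ in $(A\cdot z_1)^{m}$ equals $m\alpha^{m-1}\beta$, which together with the invertibility of $A$ and $m\ge 1$ forces $\beta=0$ (equivalently, this is the injectivity of the Veronese map $[v]\mapsto[v^{m}]$ on $\mathbb{P}^{1}$). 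Invariance of $\CC e_1$ then forces the $(2,1)$-entry of $A\in SU(2)$ to vanish, and an upper-triangular element of $SU(2)$ is diagonal; hence $A=\mathrm{diag}(a,\bar a)$ with $|a|=1$, say $a=e^{i\theta}$, and reading off the action on the basis vector $z_1^{m}$ gives eigenvalue $a^{\pm m}=e^{\pm im\theta}$. This is exactly assertion (1).

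For (2), by (1) the isotropy subgroup $Q_z=\{A\in Q:A\cdot z=z\}$ lies in $T$, and $\mathrm{diag}(a,\bar a)$ fixes $z_1^{m}$ if and only if $a^{m}=1$; the $m$-th roots of unity form a cyclic subgroup of $T$ of order $m$, so $Q_z\cong\ZZ/m\ZZ=\ZZ_m$, which for $m\ge 2$ is a nontrivial proper finite subgroup. I would close by stating the point of the lemma: such a $Q_z$ is finite, hence never isomorphic to any $Q^{k}$ with $k\ge 1$, so irreducibles of complex dimension $>2$ carry no \emph{standard} isotropy subgroup in the sense of Definition~\ref{def:standard_isotropies}. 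For a real irreducible one transfers this computation to its complexification (or realification) via Lemma~\ref{lemma:Iso(p)=Iso(p_CC)}, so nothing separate is required.

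There is essentially no obstacle: as the text already says, the lemma is bookkeeping on top of the cited $SU(2)$ representation theory. The only two points I would spell out — and the only places an error could creep in — are the coefficient computation giving $(A\cdot z_1)^{m}\in\CC z_1^{m}\Rightarrow A\cdot z_1\in\CC z_1$, and the elementary fact that an upper-triangular element of $SU(2)$ is diagonal; everything else is immediate from the chosen model $V_m=\mathrm{Sym}^{m}(\CC^{2})$.
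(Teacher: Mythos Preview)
Your argument is correct; the paper itself offers no proof, declaring the lemma ``obvious,'' so your write-up is exactly the routine symmetric-power computation one would supply, and it matches the intended content (including your reading of the stray index $n$ as $m$). One micro-simplification: in the step $(A\cdot z_1)^m\in\CC z_1^m\Rightarrow\beta=0$ you can look at the coefficient of $z_2^{m}$, which is $\beta^{m}$, and conclude immediately without invoking invertibility or splitting into cases.
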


\begin{proposition} \label{prop:tensors_abelian}
Let $\rho=\rho_1\otimes\cdots\otimes\rho_k$ be a complex representation of $Q^m$, where each $\rho_i$ is an irreducible complex representation of $Q$. Assume that $dim_{\CC}\rho_i=d_i>2$, for $1\leq i\leq r$ and $\rho_i$ is the trivial 1-dimensional representation, for $r+1\leq i\leq k$. Then the element $z=z_1^{d_1}\otimes\cdots\otimes z_1^{d_r}\otimes z_1\otimes\cdots\otimes z_1$ has non-standard isotropy subgroup.
\end{proposition}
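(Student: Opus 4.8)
The plan is to compute the isotropy subgroup of the specified element $z$ directly and show it cannot be of the form $Q^\ell$ for any $\ell$. First I would recall that in the tensor product $\rho = \rho_1\otimes\cdots\otimes\rho_k$ of irreducible complex $Q$-representations, the isotropy subgroup of a decomposable vector $z = z^{(1)}\otimes\cdots\otimes z^{(k)}$ in $Q^m$ is the set of $(s_1,\dots,s_k)$ (using the convention $m=k$ here, one $Q$-factor acting on each tensor slot) such that $\rho_i(s_i)z^{(i)} = \chi_i(s_i)^{-1} z^{(i)}$ for scalars $\chi_i(s_i)$ with $\prod_i \chi_i(s_i) = 1$. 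The cleanest route is to first identify the \emph{full} stabilizer of each line $[z^{(i)}]$ inside $Q$, i.e.\ $\mathrm{Stab}_Q([z^{(i)}])$, together with the character by which $s_i$ scales $z^{(i)}$; then $\mathrm{Iso}_{Q^m}(z)$ is the kernel of the product character on the product of these line-stabilizers.

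The key input is Lemma~\ref{lemma:not_standard_isotropies}: for $i\le r$, since $d_i > 2$, the line through $z^{(i)} = z_1^{d_i-1}$ (a highest/lowest weight vector) is stabilized only by the diagonal torus — a copy of $S^1 \cong U(1)$ — and on that torus $s_i$ acts on $z^{(i)}$ by a nontrivial character (weight $\pm(d_i-1)$); the pointwise stabilizer of $z^{(i)}$ in $Q$ is the finite cyclic group $\ZZ_{d_i-1}$. For $i > r$, $\rho_i$ is trivial one-dimensional, so $s_i$ fixes $z^{(i)} = z_1$ with trivial character and the line-stabilizer is all of $Q$. Hence $\mathrm{Stab}_{Q^m}([z]) \supseteq S^1\times\cdots\times S^1\times Q\times\cdots\times Q$ ($r$ circle factors, $k-r$ quaternion factors), and $\mathrm{Iso}_{Q^m}(z)$ is the subgroup of this cut out by the single equation $\prod_{i\le r}\zeta_i(s_i) = 1$ on the torus coordinates (the trivial factors impose nothing). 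Concretely, writing $s_i = \mathrm{diag}(a_i,\bar a_i)$ on the $i$th circle, the condition is $\prod_{i\le r} a_i^{\pm(d_i-1)} = 1$.

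Now I would argue this group is not isomorphic to any $Q^\ell$. Two cases: if $r = 0$ the statement is vacuous (no $d_i>2$ factors), so assume $r\ge 1$. If $r = 1$, the equation $a_1^{\pm(d_1-1)} = 1$ forces $a_1 \in \ZZ_{d_1-1}$, so $\mathrm{Iso}_{Q^m}(z) \cong \ZZ_{d_1-1}\times Q^{k-1}$, which contains a nontrivial finite central factor and hence is not of the form $Q^\ell$ (one can detect this via, e.g., the component group or the fact that $Q^\ell$ has no finite nontrivial direct factor, being connected and with torsion-free... — more simply, $Q^\ell$ is connected, whereas this group has $\pi_0 \cong \ZZ_{d_1-1} \neq 1$). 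If $r\ge 2$, the equation $\prod a_i^{\pm(d_i-1)} = 1$ defines a codimension-one closed subgroup of the torus $(S^1)^r$ whose identity component is an $(r-1)$-torus, so $\mathrm{Iso}_{Q^m}(z)$ has identity component $(S^1)^{r-1}\times Q^{k-r}$; this is a connected compact Lie group whose semisimple part is $Q^{k-r}$ of rank $k-r$ and whose central torus has rank $r-1 \ge 1$, so it cannot be a product of $SU(2)$'s (any $Q^\ell = SU(2)^\ell$ is semisimple with trivial central torus). In all cases the isotropy is non-standard in the sense of Definition~\ref{def:standard_isotropies} (it is not $\cong Q^\ell$).

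The main obstacle is the bookkeeping in the first step: correctly passing from the per-factor analysis of Lemma~\ref{lemma:not_standard_isotropies} — which describes stabilizers of $z_1^m$ inside a single $Q$ — to the stabilizer of the \emph{line} versus the stabilizer of the \emph{vector}, and then assembling these across tensor factors with the product-of-characters constraint coming from acting diagonally on a tensor product. One must be careful that the relevant group is the vector stabilizer in $Q^m$, and that the $r=0$ degenerate case and the trivial tensor slots are handled honestly; once the structure $(\text{torus of rank } r-1 \text{ or } \ZZ_{d_1-1})\times Q^{k-r}$ is pinned down, the non-standardness is immediate from connectedness/rank considerations and requires no further computation.
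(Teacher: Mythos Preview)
Your proposal is correct and follows essentially the same route as the paper: identify the isotropy of $z$ as $D_r\times Q^{k-r}$, where $D_r$ is the abelian subgroup of the diagonal torus $(S^1)^r$ cut out by the product-of-characters condition $\prod\lambda_i=1$, and conclude non-standardness from the structure of $D_r$. The paper's proof is terser---it simply notes $D_r$ is abelian and stops---whereas you fill in the ``hence non-standard'' step more carefully by splitting into the cases $r=1$ (disconnected, since $\ZZ_{d_1-1}$ is nontrivial) and $r\ge 2$ (positive-dimensional central torus, hence not semisimple); this extra care is welcome, since ``abelian factor'' alone does not immediately exclude an abstract isomorphism with some $Q^\ell$ without invoking connectedness or the center.
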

\begin{proof}

The stabilizer of $z$ includes the subgroup
\[
D_r = \left\{ (A_1, \ldots, A_r) \in Q^r : A_i z_1^{d_i} = \lambda_i z_1^{d_i},\ \prod \lambda_i = 1 \right\},
\]
which is abelian. Hence, the isotropy is $D_r \times Q^{m-r}$, not of standard form.
\end{proof}

\begin{corollary} \label{cor:dimension2}
Let $\rho=\rho_1\otimes\cdots\otimes\rho_k$ be a complex representation of $Q^m$, where each $\rho_i$ is an irreducible complex representation of $Q$ and $\rho$ has standard isotropy subgroups. Then each non-trivial $\rho_i$ has dimension $2$.
\end{corollary}
\begin{proof}
By Proposition \ref{prop:tensors_abelian}, higher-dimensional $\rho_i$ yield non-standard isotropy, which  implies that $dim_{\CC}\rho_i\leq 2$. Since $\rho_i$ is non-trivial, $\dim_\CC \rho_i \neq 1$, so it must be 2.
\end{proof}

Since the non-trivial irreducible complex representations of $Q$ are of dimension 2, we naturally turn to the question of how many non-trivial irreducible complex representations exist in the tensor product representation $\rho$.

\begin{proposition} \label{prop:at_most2}
Let $\rho=\rho_1\otimes\cdots\otimes\rho_k$ be a complex representation of $Q^m$, such that $\rho_i$'s are complex irreducible representations and $\rho$ has only standard isotropy groups, then at most two of the $\rho_i$'s are non-trivial.
\end{proposition}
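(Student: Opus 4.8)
The plan is to argue by contradiction, combining the reduction already available from Corollary \ref{cor:dimension2} with an explicit computation of one isotropy group. By Corollary \ref{cor:dimension2}, every non\-trivial $\rho_i$ is $2$\-dimensional, and since the standard (defining) representation $V=\CC^2$ is the only irreducible $2$\-dimensional complex representation of $Q=SU(2)$, each non\-trivial $\rho_i$ is isomorphic to $V$. So I would assume that at least three of the $\rho_i$ are non\-trivial and, after permuting the coordinates of $Q^m$, take $\rho_1\cong\rho_2\cong\rho_3\cong V$; the aim is then to exhibit a single point of $\rho$ whose isotropy group is not isomorphic to any $Q^k$.

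For the test point, fix a nonzero $z_1\in V$; its stabiliser in $Q$ is the maximal torus $T\cong S^1$ fixing the line $\CC z_1$, acting on $z_1$ through an isomorphism $\chi:T\to S^1$. In each remaining factor choose a nonzero vector (the canonical generator if $\rho_i$ is trivial, any nonzero vector if $\rho_i\cong V$) and let $w$ be the resulting pure tensor, so that
$$
z=z_1\otimes z_1\otimes z_1\otimes w\in\rho .
$$
An element $(g_1,\dots,g_m)$ fixes $z$ precisely when it rescales each tensor factor of $z$ by scalars whose product is $1$. This forces $g_1,g_2,g_3\in T$ (with scaling $\chi(g_i)$); for $i\ge 4$ it forces $g_i$ into the circle fixing the chosen line when $\rho_i\cong V$ and leaves $g_i$ free in $Q$ when $\rho_i$ is trivial; and it imposes the single relation that all of these scalars multiply to $1$. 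Reading this off, the isotropy group $(Q^m)_z$ is the direct product of a torus of rank $(\text{number of non\-trivial }\rho_i)-1\ge 2$ with a product of copies of $SU(2)$ indexed by the trivial factors; in particular it has a positive\-dimensional centre (equivalently, a non\-trivial abelianisation).

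Now I would conclude: $Q^k=SU(2)^k$ is semisimple, with finite centre $(\ZZ/2\ZZ)^k$, so it cannot be isomorphic to a group with positive\-dimensional centre. Hence $(Q^m)_z\not\cong Q^k$ for any $k$, so $(Q^m)_z$ is a non\-standard isotropy subgroup of $\rho$, contradicting the hypothesis; therefore at most two of the $\rho_i$ are non\-trivial.

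The main obstacle is the computation of $(Q^m)_z$: one must verify that no elements outside the torus\-type description above can stabilise $z$, and keep careful track of how the single scalar\-cancellation constraint couples the three distinguished $V$\-factors to the remaining factors (this is the point where the non\-commutativity of $Q$ must be handled, as in Proposition \ref{prop:tensors_abelian}). Once $(Q^m)_z$ has been identified, isolating an invariant that separates it from every $Q^k$ is routine, and ``positive\-dimensional centre'' is the cleanest choice because it is insensitive to the number and type of the extra factors.
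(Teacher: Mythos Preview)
Your proposal is correct and follows essentially the same route as the paper: both pick a pure tensor of highest-weight vectors in the non-trivial factors, compute its stabiliser as the product of a rank-$(r-1)$ torus with the copies of $Q$ coming from the trivial factors, and observe that for $r\ge 3$ this cannot be standard. Your version is in fact tidier---the paper spends two preliminary cases essentially re-deriving Corollary~\ref{cor:dimension2} before reaching the $r\ge 3$ argument, and your ``positive-dimensional centre'' invariant is a clean way to phrase the conclusion that the paper draws from Proposition~\ref{prop:tensors_abelian}.
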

\begin{proof}
By Corollary \ref{cor:dimension2}, the non-trivial representations are of dimension $2$.

First, we assume that there is $0\leq i\leq m$, such that $\rho_j$ is trivial for $j\neq i$. Thus we have that 
$$\rho=1\otimes\cdots \otimes 1\otimes\rho_i\otimes 1\otimes\cdots \otimes 1$$ 
Assume that $dim_{\CC}\rho_i=d_i>2$. For the element $z=z_1\otimes\cdots\otimes z_i\otimes\cdots\otimes z_m$, where $z_i=z_1^{d_i}$, the isotropy subgroup of z is 
$$Q_z=Q^{m-1}\times Q_{z_i}$$
By Lemma \ref{lemma:not_standard_isotropies} the isotropy subgroup of $Q_{z_i}$ is isomorphic to $\ZZ_{d_i}$. So it turns out that  
$$Q_z\simeq Q^{m-1}\times \ZZ_{d_i}$$
This is a contradiction, because the isotropy subgroups of $\rho$ are standard. Therefore, we get $dim_{\CC}(\rho_i)=2$.

Similarly, let $0\leq k,l\leq m$, such that $\rho_j$ is trivial for $j\neq k,l$, then
$$\rho=1\otimes \cdots \otimes 1\otimes\rho_l\otimes\rho_k\otimes 1\otimes\cdots \otimes 1$$ 
with $dim_{\CC}\rho_k=d_k$ and $dim_{\CC}\rho_l=d_l$. Assume that both representations $\rho_l$ and $\rho_k$ have dimension bigger than 2. By Proposition \ref{prop:tensors_abelian}, the isotropy group of the tensor product representation $\rho_k\otimes\rho_l$ , of the element $z_1^{d_k}\otimes z_1^{d_l}$ is 
$$Q_{kl}=\left\{ (A_{\lambda_k}(a_k),A_{\lambda_l}(a_l)): \lambda_k=\lambda_l^{-1} \right\} \subset S^1\times S^1$$
In this case, the isotropy group of the element $z=z_1\otimes\cdots\otimes z_1\otimes z_1^{d_k}\otimes z_1^{d_l}\otimes z_1\otimes\cdots\otimes z_1$ in $V_m$ is
$$Q_z\simeq Q^{m-2}\times Q_{kl}$$
which is not standard.

Now assume that $d_l=2$ and $d_k>2$. Again, we take the element $z_1\otimes\cdots\otimes z_1^{d_k}\otimes z_1^{d_l}\otimes\cdots\otimes z_1$, then by Lemma \ref{lemma:not_standard_isotropies} the isotropy group is isomorphic to
$$Q^{m-2}\times\ZZ_{d_k}$$
which is also not standard.

It turns out that the only case for the isotropy group of $\rho$ to be standard is when we have $dim_{\CC}(\rho_k)=dim_{\CC}(\rho_l)=2$.

Finally, let $d_i=dim_{\CC}\rho_i=2$, for $i=1,2,\cdots,r$, $r\geq 3$ and $dim_{\CC}\rho_j=1$, for $j\neq i$. We can assume 
$$\rho=\rho_1\otimes\rho_2\otimes\cdots\otimes\rho_r\otimes 1\otimes\cdots\otimes 1$$

Now, consider the element $z=z_1^2\otimes z_1^2\otimes\cdots\otimes z_1^2\otimes z_1\otimes\cdots\otimes z_1$. By Proposition \ref{prop:tensors_abelian}, the isotropy group of $\rho$ is
$$Q_z\simeq Q_{1\cdots r}\times Q^{m-r}$$
where $Q_{1\cdots r}=\left\{(A_{\lambda_1}(a_1),\cdots,A_{\lambda_r}(a_r)):  \lambda_1\cdots\lambda_r=1\right\}$ and $\lambda_i$'s are the eigenvalues of the Lemma \ref{lemma:not_standard_isotropies}. This means that $Q_{1\cdots r}$ is not standard.
\end{proof}

Let $Q^{n-1}_i$ denote the isotropy subgroup of $Q^n$, such that $Q^{n-1}_i=Q\times\cdots Q\times 1 \times Q\times\cdots\times Q$, with the trivial isotropy group in the i-th position.

\begin{corollary} \label{cor:conjugation isotropies}
Let $\rho=\bigoplus\rho_i$, $1\leq i\leq k$, in $Rep(Q^n,\CC)$ with standard isotropy subgroups. If $Q^{n-1}_i$ is maximal standard isotropy subgroup of $\rho$, then there is $\rho_j$ for $1\leq j\leq k$, such that $Q^{n-1}_i$ is the maximal isotropy subgroup of $\rho_j$.
\end{corollary}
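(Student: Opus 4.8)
\textbf{Proof strategy for Corollary \ref{cor:conjugation isotropies}.}
The plan is to reduce the statement about the direct sum $\rho=\bigoplus_i\rho_i$ to a statement about the individual summands by using the elementary fact that the isotropy subgroup of a vector $v=(v_1,\dots,v_k)$ in a direct sum is the intersection $\bigcap_i (Q^n)_{v_i}$ of the isotropy subgroups of its components. First I would fix a vector $v$ in $\rho$ whose stabilizer is exactly $Q_i^{n-1}$, which exists by hypothesis since $Q_i^{n-1}$ occurs as an isotropy subgroup; write $v=\sum_i v_i$ with $v_i\in\rho_i$. Then $Q_i^{n-1}=\bigcap_i (Q^n)_{v_i}$, so each $(Q^n)_{v_i}$ contains $Q_i^{n-1}$. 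Since $\rho$ has only standard isotropy subgroups, each $(Q^n)_{v_i}$ is standard, i.e. conjugate to some $Q^m$; and a standard subgroup containing $Q_i^{n-1}$ (which is already maximal standard, $m=n-1$) must be either $Q_i^{n-1}$ itself or all of $Q^n$. Thus for each $i$ either $(Q^n)_{v_i}=Q_i^{n-1}$ or $v_i$ is fixed by all of $Q^n$.

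The key step is then to rule out the possibility that $(Q^n)_{v_i}=Q^n$ for \emph{every} $i$: if that held, $v$ itself would be fixed by all of $Q^n$, contradicting $(Q^n)_v=Q_i^{n-1}\neq Q^n$. Hence there is at least one index $j$ with $(Q^n)_{v_j}=Q_i^{n-1}$. It remains to check that $Q_i^{n-1}$ is actually the \emph{maximal} isotropy subgroup of $\rho_j$, not merely an isotropy subgroup: any isotropy subgroup of $\rho_j$ that strictly contained $Q_i^{n-1}$ would have to be standard (as $\rho_j$ is a summand of $\rho$, any vector of $\rho_j$ is a vector of $\rho$) and strictly larger than a maximal standard subgroup, forcing it to be $Q^n$ — but then $\rho_j$ would have a nonzero global fixed vector whose stabilizer is $Q^n$, while also having $v_j$ with stabilizer exactly $Q_i^{n-1}$; decomposing $\rho_j$ further into irreducibles and invoking Corollary \ref{cor:dimension2} and Proposition \ref{prop:at_most2} shows the fixed subspace and the ``$Q_i^{n-1}$-type'' subspace are complementary summands, so $Q_i^{n-1}$ is indeed maximal among the isotropy subgroups of $\rho_j$.

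The main obstacle I anticipate is the bookkeeping in this last step: making precise that ``maximal standard isotropy subgroup of $\rho$'' transfers to ``maximal isotropy subgroup of $\rho_j$'' requires knowing that no isotropy subgroup of $\rho$ is non-standard (given) and that $Q_i^{n-1}$ sits maximally in the standard poset, which is the content of Definition \ref{def:standard_isotropies}. One must also be slightly careful that the vector $v$ realizing $Q_i^{n-1}$ can be chosen with the stabilizer equal to, not merely containing, $Q_i^{n-1}$; this is where the earlier structural results (Corollary \ref{cor:dimension2}, Proposition \ref{prop:at_most2}) enter, guaranteeing that the isotropy stratification of a standard-isotropy representation of $Q^n$ behaves like the quasitoric model, so that generic vectors in each stratum have the expected stabilizer. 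Once these points are pinned down, the corollary follows formally from the intersection formula for isotropy in direct sums.
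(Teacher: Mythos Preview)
Your approach is correct and shares the paper's starting point: both arguments rest on the intersection formula for isotropy in a direct sum. The paper's proof is extremely terse---it records $Iso(\rho)=\bigcap_j Iso(\rho_j)$ and then invokes Proposition~\ref{prop:at_most2} (the ``at most two nontrivial tensor factors'' result) to conclude. Your version works instead at the level of a single witness vector $v=\sum_j v_j$ and uses only the elementary observation that a standard subgroup containing $Q_i^{n-1}$ must, by dimension, equal $Q_i^{n-1}$ or $Q^n$; this already yields some $j$ with $(Q^n)_{v_j}=Q_i^{n-1}$ without appealing to Proposition~\ref{prop:at_most2} at all. In that sense your existence argument is more self-contained than the paper's.

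Your final ``maximality'' discussion is more elaborate than needed. The paper is implicitly taking the $\rho_j$ to be irreducible (this is how the corollary is used in Theorem~\ref{thm:isotropy gps}, and it is why Proposition~\ref{prop:at_most2} can be invoked). Under that standing assumption a nontrivial $\rho_j$ has no nonzero $Q^n$-fixed vector, so the only standard isotropy strictly above $Q_i^{n-1}$, namely $Q^n$, cannot occur for a nonzero vector of $\rho_j$; maximality is then immediate and you do not need to split off a fixed summand or re-invoke Corollary~\ref{cor:dimension2}. If you prefer not to assume irreducibility of the summands, your decomposition argument works, but it is cleaner simply to pass to an irreducible sub-summand of $\rho_j$ containing $v_j$.
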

\begin{proof}
Let $\rho=\bigoplus\rho_i$, $1\leq i\leq k$, in $Rep(Q^n,\CC)$. The isotropy group of $\rho$ is given by
$$Iso(\rho)=\bigcap\limits_{1 \leq j \leq k} Iso(\rho_j)$$
Consider a maximal standard isotropy group $Q^{n-1}_i$ of $\rho$. By Proposition \ref{prop:at_most2} the representations $\rho_j$'s are trivial except at most two of them. The result follows.
\end{proof}

In the next result, we show that a real $4$n-dimensional representation of $Q^n$ decomposes into a direct sum of irreducible $4$-dimensional representations, either of real or quaternionic type.

\begin{theorem} \label{thm:isotropy gps}
Let $\rho$ be a real representation of $Q^n$, with $dim_{\RR}\rho=4n$. Assume that all the isotropy subgroups of $\rho$ are standard. Also assume that there is a maximal isotropy subgroup which is conjugate to a unique $Q^{n-1}_i$, for each $i$. Then $\rho=\bigoplus\rho_i$, $1\leq i\leq k$, with $\rho_i$: irreducible representations such that $\rho_i=\rho_{i_1}\otimes\cdots\otimes\rho_{i_n}$, where $\rho_{i_j}$'s are trivial except at most two of them.
\end{theorem}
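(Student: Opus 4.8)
The plan is to combine the representation-theoretic structure results of this section with the real-complex dictionary from Lemma~\ref{lemma:Iso(p)=Iso(p_CC)}. First I would reduce to the complex case: write $\rho$ as coming either from a complexification $\rho^+\in Rep_+(Q^n,\CC)$ of real type or from a realification $\rho_\RR\in Rep_-(Q^n,\CC)$ of quaternionic type (using the canonical decomposition $Rep(Q^n,\RR)=Rep_+(Q^n,\CC)\sqcup Rep_-(Q^n,\CC)$), and note that by Lemma~\ref{lemma:Iso(p)=Iso(p_CC)} the isotropy groups are unchanged, so the hypothesis ``all isotropy subgroups standard'' transfers to the associated complex representation. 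Then decompose the complex representation into irreducibles. Since $Q^n=Q\times\cdots\times Q$ and $Q$ is compact, every irreducible complex representation of $Q^n$ is an external tensor product $\rho_{i_1}\otimes\cdots\otimes\rho_{i_n}$ of irreducible complex representations $\rho_{i_j}$ of $Q$ (this is the standard fact that $Irr(G\times H)=Irr(G)\times Irr(H)$ for compact groups). Writing $\rho$ (complex form) as $\bigoplus_i \rho_i$ with each $\rho_i=\rho_{i_1}\otimes\cdots\otimes\rho_{i_n}$, the isotropy subgroup of $\rho$ is $\bigcap_i Iso(\rho_i)$, and each summand $\rho_i$ — being a subrepresentation — inherits only standard isotropy, or at least any non-standard isotropy in a summand survives into a non-standard isotropy of $\rho$ (the subtle point addressed in Corollary~\ref{cor:conjugation isotropies}).

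Next I would apply Proposition~\ref{prop:at_most2} summand by summand: each $\rho_i$ has standard isotropy (inherited), so by that proposition at most two of the tensor factors $\rho_{i_1},\ldots,\rho_{i_n}$ are non-trivial, and by Corollary~\ref{cor:dimension2} each non-trivial factor is the $2$-dimensional irreducible. Hence every irreducible summand $\rho_i$ is $4$-dimensional over $\RR$ after passing back (a $2$-dimensional complex irreducible of quaternionic type realifies to a $4$-real-dimensional irreducible, and a $2\otimes 1$-type complex representation either is of quaternionic type or pairs with its conjugate; in all cases the real irreducible pieces are $4$-real-dimensional). The dimension count $\dim_\RR\rho=4n$ then forces exactly $n$ such summands (counting with the real/quaternionic structure), giving the claimed decomposition $\rho=\bigoplus_{i=1}^{n}\rho_i$ with each $\rho_i=\rho_{i_1}\otimes\cdots\otimes\rho_{i_n}$ having at most two non-trivial factors. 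Finally, the extra hypothesis — that for each $i$ there is a maximal isotropy subgroup conjugate to the \emph{unique} $Q^{n-1}_i$ — is used via Corollary~\ref{cor:conjugation isotropies} to guarantee that the non-trivial factors are distributed so that all $n$ coordinate directions actually occur, i.e. the decomposition is "full" rather than degenerate; this is what pins down $k=n$ and the correct index pattern.

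The main obstacle I expect is the bookkeeping in passing between real and complex representations while tracking isotropy and dimension simultaneously. Lemma~\ref{lemma:Iso(p)=Iso(p_CC)} handles isotropy invariance, but one must be careful that an irreducible \emph{real} representation can correspond to either a single quaternionic-type complex irreducible (realified) or a conjugate pair of complex irreducibles of complex type, and the tensor-factor description must be stated at the real level without losing the ``at most two non-trivial factors'' conclusion. A second, more delicate point is justifying that each irreducible summand of $\rho$ individually has standard isotropy: a priori only the total intersection $\bigcap_i Iso(\rho_i)$ is constrained, so one needs Corollary~\ref{cor:conjugation isotropies} (and the maximality/uniqueness hypothesis on the $Q^{n-1}_i$) to rule out the possibility that non-standard isotropies in individual summands conspire to intersect to something standard. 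Once these two points are handled, the rest is a dimension count and assembling the pieces.
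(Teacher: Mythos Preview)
Your plan is essentially the paper's own argument: pass to the associated complex representation via Lemma~\ref{lemma:Iso(p)=Iso(p_CC)}, decompose into irreducibles $\rho_i=\rho_{i_1}\otimes\cdots\otimes\rho_{i_n}$, verify each summand inherits standard isotropy, apply Proposition~\ref{prop:at_most2} and Corollary~\ref{cor:dimension2} to each, and count. The paper then uses Corollary~\ref{cor:conjugation isotropies} together with the uniqueness hypothesis on the $Q^{n-1}_i$ to force $k=n$, just as you outline; your additional dimension count ($4n/4=n$) is a fine redundancy.

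One clarification: you flag ``each irreducible summand individually has standard isotropy'' as the delicate step and propose to handle it with Corollary~\ref{cor:conjugation isotropies}. That corollary does something else (it locates a summand realizing a given maximal isotropy). The paper's argument for the step you are worried about is much simpler and does not need the maximality hypothesis at all: if $z\in V_j$ had non-standard stabilizer $G$, then the zero-padded vector $z'=(0,\ldots,0,z,0,\ldots,0)$ in $\bigoplus V_i$ has stabilizer exactly $G$ (the zero components contribute full $Q^n$), so $\rho$ itself would have a non-standard isotropy, contradicting the hypothesis. Your phrasing ``only the total intersection $\bigcap_i Iso(\rho_i)$ is constrained'' is therefore too pessimistic: the hypothesis constrains the isotropy of \emph{every} vector, and vectors supported in a single summand test that summand directly. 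With this fix your proof goes through and matches the paper.
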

\begin{proof}
Let $\rho\in Rep(Q^n,\RR)$. We know that 
$$Rep(Q^n,\RR)=Rep^{+}(Q^n,\CC)\sqcup Rep^{-}(Q^n,\CC)$$
\cite[p.291]{ItzRoStr1990}.
First, assume that $\rho$ comes from $Rep^+(Q^n,\CC)$. Let $e_{+}(\rho)=\rho^{+}$, where $e_{+}$ denotes the process of complexification. That is, we take the real representation $\rho$ and complexify it \cite[p.94]{tomDieck}.

Consider a decomposition $\rho^{+}=\rho^{+}_1\oplus\cdots\oplus\rho^{+}_k$ of $\rho^{+}$ into non-trivial irreducible representations $\rho^{+}_i$ and an element $z$ in $V_j$, such that the isotropy subgroup of $z$, $Q_z=G$ is non-standard. This means that for an element $z'=(0,\cdots,0,z,0,\cdots,0)$ we have $Q_{z'}=G\times Q^{n-1}$, which is also non-standard, contradiction. Hence, all isotropy subgroups of $\rho^{+}_j$ must be standard, for all $j$.

Since these subgroups are standard, Lemma \ref{lemma:Iso(p)=Iso(p_CC)} implies that  
$$Iso(\rho)=Iso(\rho_{+})$$

Let $Q_i^{n-1}$ be a maximal isotropy subgroup of $\rho^{+}$. Corollary \ref{cor:conjugation isotropies} implies that, up to conjugation, there exist $\rho_i^{+}$ with isotropy subgroup $Q_i^{n-1}$. So that
$$Iso(\rho_i^{+})=Q_i^{n-1}$$
for $1\leq i\leq k$.
Since all the $Q_i^{n-1}$, $i=1,\cdots,n$ appear, we obtain $k=n$. Therefore,
$$\rho^{+}=\rho^{+}_1\oplus\cdots\oplus\rho^{+}_n$$
Hence it turns out that 
$$\rho=\rho_1\oplus\cdots\oplus\rho_n$$
because the categories $Rep(Q^n,\RR)=Rep^{+}(Q^n,\CC)$ and $Rep_{\RR}(Q^n,\CC)$ are equivalent.

On the other hand, assume that the representation $\rho$ comes from $Rep^{-}(Q^n,\CC)$. In this case, we consider the realification of $\rho$, i.e. by restriction of scalars from $\CC$ to $\RR$ we have the real representation $\rho_{\RR}=\rho$ of $Q^n$. Again, the hypothesis implies that 
$$\rho_{\RR}=\rho_{1\RR}\oplus\cdots\oplus\rho_{k\RR}$$
with maximal standard isotropy subgroups conjugate to $Q^{n-1}$. So that 
$$Iso(\rho)=Iso(\rho_{\RR})$$
As before, consider $Q^{n-1}_i$, the isotropy subgroup of $\rho_{\RR}$. Then, Corollary \ref{cor:conjugation isotropies} and the uniqueness of $Q^{n-1}_i$ for each $i$ show that 
$$\rho=\rho_1\oplus\cdots\oplus\rho_n$$

In both cases, each $\rho_i$ is an irreducible complex representation, that has maximal standard isotropy subgroups.
 
Proposition \ref{prop:at_most2} implies that 
$$\rho_i=\rho_{i_1}\otimes\cdots\otimes\rho_{i_n}$$
where the tensor product representations $\rho_{i_j}$'s are trivial except at most two of them, which are of real dimension $4$. So each irreducible representation of $Q^n$ in the decomposition must be of real dimension $4$. Thus we have $\rho=\rho_1\oplus\cdots\oplus\rho_n$, where $\rho_i=\rho_{i_1}\otimes\cdots\otimes\rho_{i_n}$. The $n's$ are all the same here.
\end{proof}

\section{Rigidity}

Let $M^{4n}$ be a locally regular $Q^n$-manifold and $X = M/Q^n$ the quotient, which is a nice manifold with corners. In this section, we assume that $X$ is a homotopy polytope so that all the faces as well $X$ itself are contractible manifolds with boundary. Then $M^{4n}\cong_{Q^n} \mathcal{M}(\lambda)$, for a characteristic function $\lambda$, covering the identity on $X$.

\begin{definition}
Let $N$ be a $G$-manifold. The $G$-action is called \emph{locally linear} if, for each $y \in N$ with isotropy group $G_y$, there is a $G_y$-neighborhood of $y$, a $G_y$-representation $V$ and a open $G$-embedding
$$G{\times}_{G_y}V \to N,$$
called a \emph{linear tube}.
\end{definition}

For details on locally linear actions see \cite{bredon}.

\begin{remark}
\begin{enumerate}
\item[(1)] The $Q^n$-action on $M^{4n}$ is effective.
\item[(2)] The isotropy groups of the $Q^n$-action on $M^{4n}$ are standard subgroups of $Q^n$.
\item[(3)] The $Q^n$-action on $M^{4n}$ is locally linear.
\end{enumerate} 
\end{remark}

Let $N^{4n}$ be a locally linear $Q^n$-manifold and $f: N^{4n} \to M^{4n}$ be a $Q^n$-homotopy equivalence. We will prove the results of the remark for $N^{4n}$, following closely \cite{mp}.

\begin{lemma}
The $Q^n$ action on $N^{4n}$ is effective.
\end{lemma}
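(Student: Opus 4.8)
The plan is to deduce effectiveness of the $Q^n$-action on $N^{4n}$ from effectiveness of the action on $M^{4n}$, using only that $f\colon N^{4n}\to M^{4n}$ is a $Q^n$-equivariant map. Recall that an action is effective precisely when the kernel of the homomorphism $Q^n\to \mathrm{Homeo}(N^{4n})$ is trivial; equivalently, the intersection $\bigcap_{y\in N^{4n}} Q^n_y$ of all isotropy groups is trivial. So the goal reduces to showing that no nontrivial element of $Q^n$ fixes all of $N^{4n}$.

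First I would suppose, for contradiction, that $K=\ker\bigl(Q^n\to\mathrm{Homeo}(N^{4n})\bigr)$ is a nontrivial (closed, normal) subgroup of $Q^n$, so that $K$ acts trivially on $N^{4n}$. Then for every $y\in N^{4n}$ and every $k\in K$ we have $f(y)=f(k\cdot y)=k\cdot f(y)$, so $K$ fixes every point in the image $f(N^{4n})\subset M^{4n}$. The key step is to promote this to: $K$ fixes all of $M^{4n}$. Here I would use that $f$ is a $Q^n$-homotopy equivalence, hence in particular surjective on path components and, more usefully, that it has a $Q^n$-homotopy inverse $g\colon M^{4n}\to N^{4n}$ with $f\circ g\simeq_{Q^n}\mathrm{id}_{M^{4n}}$. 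For any $x\in M^{4n}$, the point $f(g(x))$ lies in $f(N^{4n})$, hence is fixed by $K$; and since $f\circ g$ is $Q^n$-homotopic to the identity, it induces the identity on $\pi_0$ and preserves orbit types, so $f(g(x))$ lies in the same $Q^n$-orbit stratum as $x$. Because $M^{4n}$ is a locally regular quoric manifold, its isotropy groups are the \emph{standard} subgroups $Q_{\gamma_1,\dots,\gamma_k}$ classified in Proposition~\ref{prop:subgroupsIsoQ^n}, and the fixed set $(M^{4n})^K$ is a union of strata; since it is nonempty (it contains $f(N^{4n})$, which meets every orbit type present) and $K$ is normal, $K$ must be contained in every isotropy subgroup of $M^{4n}$. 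But the $Q^n$-action on $M^{4n}$ is effective, so $\bigcap_{x\in M^{4n}}Q^n_x=1$, forcing $K=1$, a contradiction.

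Alternatively, and more cleanly, I would argue directly at a point with trivial isotropy: since the $Q^n$-action on $M^{4n}$ is effective and locally regular, the principal orbit is free, so there is $x_0\in M^{4n}$ with $Q^n_{x_0}=1$; choosing any $y_0\in N^{4n}$ with $f(y_0)$ in the orbit of $x_0$ (possible since $f$ is surjective up to the $Q^n$-homotopy, and orbit types are preserved), equivariance gives $Q^n_{y_0}\subseteq Q^n_{f(y_0)}=1$, and then $K\subseteq Q^n_{y_0}=1$.

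The main obstacle is the orbit-type bookkeeping: one must be careful that $f$, being only a $Q^n$-\emph{homotopy} equivalence rather than an isovariant map at this stage, nonetheless cannot collapse the principal orbit onto a singular one — this is where one invokes that $f$ has a $Q^n$-homotopy inverse and that $f_*$ is a bijection on the relevant equivariant homotopy/$\pi_0$ data, so that the free orbit in $M^{4n}$ is hit by a free orbit in $N^{4n}$. (This is exactly the kind of statement refined later when one shows $f$ is isovariant; here only the weak consequence that a point of trivial isotropy downstairs is hit by a point of trivial isotropy upstairs is needed.) Once that is in hand, effectiveness of $N^{4n}$ follows immediately from effectiveness of $M^{4n}$.
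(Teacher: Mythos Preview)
Your argument has a real gap at the ``main obstacle'' you yourself flag: you need a point $y_0\in N$ with $Q^n_{f(y_0)}=1$, but equivariance of a map only gives $Q^n_{y_0}\subseteq Q^n_{f(y_0)}$, not the reverse, and an equivariant homotopy $H$ from $f\circ g$ to $\mathrm{id}_M$ likewise only yields $Q^n_{x}\subseteq Q^n_{H(x,t)}$ --- isotropy can \emph{increase} along an equivariant homotopy, so there is no reason $f(g(x_0))$ must lie in the free stratum just because $x_0$ does. Your first approach has the same defect: the assertions that $f(N)$ ``meets every orbit type present'', or that $K$ normal together with $(M)^K\neq\emptyset$ forces $K$ into every isotropy group of $M$, are not justified and are in general false without further input.

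The paper bypasses all of this with a one-line dimension count, using a standard fact you never invoke: a $Q^n$-homotopy equivalence restricts to an ordinary homotopy equivalence $f^G\colon N^G\to M^G$ on $G$-fixed sets for every closed subgroup $G\le Q^n$. So if some $q\neq 1$ acts trivially on $N$, set $G=\langle q\rangle$; then $N^G=N$, while $M^G$ is a closed \emph{proper} submanifold of $M$ since the action on $M$ is effective and locally linear. The homotopy equivalence $N=N^G\simeq M^G$ then forces $\dim N=\dim M^G<\dim M=\dim N$, a contradiction. No orbit-type bookkeeping, surjectivity, or isovariance is needed.
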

\begin{proof}
We assume that this is not the case. So there is some $q\in Q^n$ that fixes $N^{4n}$ pointwise. Let $G=<q>$. Then $N^G=N^{4n}\simeq M^G$ since $f$ is an equivariant homotopy equivalence. But $M^G$ is a closed proper submanifold of $M^{4n}$, because the action on $M^{4n}$ is effective. Thus, $dim(N^G)=dim(M^G)<dim(M^{4n})=dim(N^{4n})$, contradiction.
\end{proof}

\begin{lemma}
The non-trivial isotropy subgroups of $N^{4n}$ are standard.
\end{lemma}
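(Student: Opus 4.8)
The plan is to transport isotropy information across the equivariant homotopy equivalence $f\colon N^{4n}\to M^{4n}$ and then feed it into the representation-theoretic results of Section~5. First I would establish that the isotropy subgroups occurring in $N^{4n}$ are, up to conjugacy, exactly those occurring in $M^{4n}$. For one inclusion: if $H<Q^n$ is a closed subgroup then, since $f$ is a $Q^n$-homotopy equivalence, the restriction $f^H\colon N^H\to M^H$ on fixed-point sets is a homotopy equivalence (this is the standard fact that equivariant homotopy equivalences induce homotopy equivalences on all fixed sets $N^H \simeq M^H$, obtained by applying the homotopy $H$-fixed-point functor to $f$ and a homotopy inverse). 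Hence $N^H\neq\emptyset$ iff $M^H\neq\emptyset$. A subgroup $H$ is an isotropy group of $N$ iff $N^H$ strictly contains $N^{H'}$ for all $H'\supsetneq H$; combined with the fixed-set equivalences and the same characterization for $M$, this forces the isotropy families to coincide. Since every isotropy group of $M^{4n}$ is a standard subgroup (a conjugate of some $Q^k$, by the preceding remark and the locally regular structure), the same holds for $N^{4n}$.

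A cleaner route, which I would actually prefer to carry out, is local: fix $y\in N^{4n}$ with isotropy group $H=Q^n_y$. By local linearity there is a linear tube $Q^n\times_H V\hookrightarrow N^{4n}$ with $V$ an $H$-representation, so a neighborhood of the orbit $Q^n\cdot y$ is modelled on a slice representation. Restricting $f$ near this orbit and using that $f^H\colon N^H\to M^H$ is a homotopy equivalence, the image orbit in $M^{4n}$ has the same isotropy group $H$ (this is where one uses that $f$ is, or may be arranged to be, isovariant — a point the introduction flags as already handled). Since $M^{4n}$ is locally regular, the isotropy group $H$ of that point in $M$ is a standard subgroup $Q^k$; therefore $H=Q^n_y$ is standard as well. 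Running this over all $y\in N^{4n}$ gives the claim.

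The main obstacle is the passage from "same isotropy groups as sets" to "same isotropy groups at corresponding points," i.e.\ knowing that $f$ can be taken isovariant rather than merely equivariant; without isovariance, an equivariant equivalence could in principle carry a point with small isotropy to one with larger isotropy in a way that is only detectable after deformation. As the introduction indicates, isovariance of $f$ is established earlier in the argument (the step that $N^{4n}$ has the same isotropy groups as $M^{4n}$ and $f$ is an isovariant homotopy equivalence), so here I would simply invoke that, or note that for the present qualitative statement it suffices that the isotropy \emph{family} of $N^{4n}$ is contained in that of $M^{4n}$, which follows already from the fixed-set equivalences $N^H\simeq M^H$. Once the containment is in hand, standardness of all isotropy subgroups of $N^{4n}$ is immediate from standardness for $M^{4n}$, and no further representation theory is needed for this lemma — Theorem~\ref{thm:isotropy gps} will only be invoked later, when analyzing the slice representations to conclude local regularity of $N^{4n}$.
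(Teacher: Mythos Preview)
Your first approach is the paper's strategy: compare the isotropy families via the homotopy equivalences $N^H\simeq M^H$ on fixed sets. But you skip the one nontrivial step. You assert that ``$H$ is an isotropy group iff $N^H\supsetneq N^{H'}$ for all $H'\supsetneq H$, and this transfers along the fixed-set equivalences.'' Strict containment of fixed sets, however, is not a homotopy invariant of the pair: one can have $N^{H'}\subsetneq N^H$ while $N^{H'}\simeq N^H$ (a closed submanifold meeting only some components, say). The paper closes this gap with a concrete dimension argument. Pick $z\in N$ with non-standard isotropy $Q_z$; since all isotropy groups of $M$ are standard, there is a standard $Q'\supsetneq Q_z$ with $M^{Q_z}=M^{Q'}$. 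On the $N$ side, $z\in N^{Q_z}\setminus N^{Q'}$, so by local linearity $N^{Q'}$ sits as a proper closed submanifold inside $N^{Q_z}$ and hence $\dim N^{Q_z}>\dim N^{Q'}$. Then
\[
\dim M^{Q_z}=\dim N^{Q_z}>\dim N^{Q'}=\dim M^{Q'},
\]
contradicting $M^{Q_z}=M^{Q'}$. Your sketch needs exactly this comparison to go through; the bare statement ``the isotropy family of $N$ is contained in that of $M$, which follows already from $N^H\simeq M^H$'' is precisely the assertion requiring proof.

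Your second, ``preferred'' route is circular. You invoke isovariance of $f$ to conclude $Q^n_y=Q^n_{f(y)}$, citing the introduction's outline. But in the paper's logical order isovariance is not available here: this lemma (together with the effectiveness lemma before it) is what feeds into Corollary~\ref{cor:sameISO} and then into the slice analysis of Proposition~\ref{prop:loc_regular}. Assuming isovariance at this stage assumes what the lemma is meant to help establish. Stick with the fixed-set argument and supply the dimension contradiction.
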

\begin{proof}
Let $z$ be in $N^{4n}$ with isotropy group $Q_z$, that is not (canonical) standard. Since $N^{Q_z}\simeq M^{Q_z}$ and $N^{Q_z}\neq \emptyset$, we have that $M^{Q_z}\neq \emptyset$. Since non-trivial isotropy subgroups of $M^{4n}$ are (canonical) standard subgroups $Q^k$, $M^{Q'}=M^{Q_z}$ for some $Q'$ that strictly contains $Q_z$. But $z\in N^{Q_z}$ and $z\notin N^{Q'}$. Thus $N^{Q_z}\supsetneq N^{Q'}\simeq M^{Q'}$. Since fixed points are closed submanifolds without boundary, we have that $dimM^{Q_z}=dimN^{Q_z}>dimN^{Q'}=dimM^{Q'}$. But this is a contradiction since $dimM^{Q_z}=dimM^{Q'}$.
\end{proof}

\begin{corollary} \label{cor:sameISO}
The isotropy subgroups of $N^{4n}$ and $M^{4n}$ are the same.
\end{corollary}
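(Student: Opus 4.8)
The plan is to prove that the collections $Iso(N^{4n})$ and $Iso(M^{4n})$ of isotropy subgroups coincide, by a fixed-point argument in the spirit of the two preceding lemmas but made robust against disconnectedness of fixed-point sets. Since the non-trivial isotropy subgroups of $N^{4n}$ are standard (previous lemma) and those of $M^{4n}$ are standard (the remark), and $Q^n$ has only finitely many standard subgroups, both collections lie in one fixed finite set $\mathcal S$ of subgroups (together with the trivial one). Because $f$ and any $Q^n$-homotopy inverse $g$ of $f$ are both $Q^n$-homotopy equivalences between spaces now known to have standard isotropy, it suffices to show $Iso(M^{4n})\subseteq Iso(N^{4n})$.

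First I would record two structural facts. (i) For every subgroup $H\leq Q^n$ the maps $f^H:N^H\to M^H$ and $g^H:M^H\to N^H$ are mutually inverse homotopy equivalences, and the equivariant homotopies exhibiting $g^H f^H\simeq\mathrm{id}$ and $f^H g^H\simeq\mathrm{id}$ preserve each fixed set $N^{H'}$, resp.\ $M^{H'}$; hence $f^H$ is a homotopy equivalence of pairs $\bigl(N^H,\bigcup_{H'\supsetneq H}N^{H'}\bigr)\to\bigl(M^H,\bigcup_{H'\supsetneq H}M^{H'}\bigr)$, where the unions run over the finitely many standard $H'\supsetneq H$. In particular $N^H$ and $M^H$ are simultaneously empty, and $H_*\bigl(N^H,\bigcup_{H'}N^{H'}\bigr)\cong H_*\bigl(M^H,\bigcup_{H'}M^{H'}\bigr)$. (ii) Since $N^{4n}$, $M^{4n}$ are closed and the actions locally linear, each $N^H$, $M^H$ is a closed locally flat submanifold and each $N^{H'}\subseteq N^H$ (likewise for $M$) is a closed submanifold; moreover, by the slice theorem the isotropy of a point near $z$ is a subgroup of the isotropy of $z$, and a standard $H'\supsetneq H$, having strictly larger rank than $H$, is never subconjugate to $H$.

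Then I would argue as follows. Fix $H\in Iso(M^{4n})$. If $H$ is maximal in $\mathcal S$, then $M^H\neq\emptyset$ gives $N^H\simeq M^H\neq\emptyset$ and every point of $N^H$ has isotropy exactly $H$, so $H\in Iso(N^{4n})$. Otherwise suppose, for contradiction, that $H\notin Iso(N^{4n})$; then every point of $N^H$ has standard isotropy strictly larger than $H$, so $N^H=\bigcup_{H'\supsetneq H}N^{H'}$ and $H_*\bigl(N^H,\bigcup_{H'}N^{H'}\bigr)=0$. Choosing $z\in M^{4n}$ with isotropy subgroup exactly $H$, fact (ii) shows that a neighbourhood of $z$ in $M^H$ is disjoint from every $M^{H'}$ with $H'\supsetneq H$; hence $\bigcup_{H'\supsetneq H}M^{H'}$ is a proper closed subset of the closed manifold $M^H$, all of whose components have dimension strictly smaller than that of the component $C$ of $M^H$ through $z$. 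A computation with $\ZZ/2$ coefficients (Poincar\'e duality on $C$ together with a dimension bound for the subset) then yields $H_*\bigl(M^H,\bigcup_{H'}M^{H'};\ZZ/2\bigr)\neq0$, contradicting the isomorphism in (i). Hence $H\in Iso(N^{4n})$, and the reverse inclusion follows symmetrically. The step I expect to be the main obstacle is exactly this last comparison: the two preceding lemmas get by with a one-line dimension count, but here $M^H$ is genuinely disconnected in general (for instance $M^{Q^n}$ is a finite set of points), so ``$\dim M^{H'}<\dim M^H$'' can fail globally even when $H$ is realized; upgrading $f^H$ to an equivalence of pairs and localising the dimension bound to the component $C$ containing a point of exact isotropy is the device that makes the argument robust. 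The remaining ingredients — reduction to the finite poset of standard subgroups, passage to fixed sets, and the rank obstruction to subconjugacy — are routine.
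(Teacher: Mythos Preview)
Your argument is correct. The paper gives no proof for this corollary; it is meant to follow immediately from the two preceding lemmas by running the same dimension count in both directions (if some standard $H$ occurred as an isotropy group on one side but not the other, compare $\dim N^H$, $\dim M^H$, $\dim N^{H'}$, $\dim M^{H'}$ via the fixed-set homotopy equivalences and derive a contradiction). Your proof is essentially that argument, made robust against the disconnectedness issue you correctly flag by upgrading $f^H$ to a homotopy equivalence of pairs and reading off the contradiction in relative $\ZZ/2$-homology rather than in raw dimension. This is a genuine refinement: the paper's phrasing in the preceding lemma (deducing ``$\dim N^{Q_z}>\dim N^{Q'}$'' from a proper inclusion of closed submanifolds) already tacitly localises to the component through the chosen point, and your version makes that step explicit.

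One small imprecision worth tightening: the clause ``all of whose components have dimension strictly smaller than that of the component $C$'' need not hold for components of $\bigcup_{H'\supsetneq H}M^{H'}$ lying in \emph{other} components of $M^H$. What is true, and what your computation on $C$ actually uses, is that every component of $M^{H'}$ contained in $C$ has dimension $<\dim C$ (a closed submanifold of the connected manifold $C$ of full dimension would be clopen, hence equal to $C$, contradicting $z\notin M^{H'}$). Since the relative homology of the pair splits over the components of $M^H$ and you only need the $C$-summand to be nonzero, the argument goes through unchanged with this reading.
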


\begin{corollary} \label{cor:nonempty_fixset}
For each isotropy group $Q'$, each component of the fixed point set $N^{Q'}$ contains a $Q^n$-fixed point.
\end{corollary}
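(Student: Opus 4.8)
The plan is to reduce the statement to the corresponding fact about the canonical model and then transport it back along $f$.

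First I would fix a $Q^n$-equivariant homotopy inverse $g:M^{4n}\to N^{4n}$ of $f$, so that $g\circ f\simeq_{Q^n}\mathrm{id}_{N}$ and $f\circ g\simeq_{Q^n}\mathrm{id}_{M}$. For any subgroup $H\le Q^n$ the restrictions $f^{H}:=f|_{N^{H}}:N^{H}\to M^{H}$ and $g^{H}:=g|_{M^{H}}:M^{H}\to N^{H}$ are well defined (both maps are equivariant), the equivariant homotopies restrict to ordinary homotopies $g^{H}f^{H}\simeq\mathrm{id}_{N^{H}}$ and $f^{H}g^{H}\simeq\mathrm{id}_{M^{H}}$, and these restrictions are compatible with the inclusions $N^{H'}\subseteq N^{H}$ and $M^{H'}\subseteq M^{H}$ whenever $H\subseteq H'$. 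In particular, since $Q'\subseteq Q^n$, any $p\in M^{Q^n}\subseteq M^{Q'}$ satisfies $g^{Q'}(p)=g^{Q^n}(p)\in N^{Q^n}$; that is, $g$ carries $Q^n$-fixed points of $M$ to $Q^n$-fixed points of $N$. Note also that the fixed-point sets involved are submanifolds (the action on $M^{4n}$ is locally regular and the action on $N^{4n}$ is locally linear), hence locally connected, so their components are open and closed.

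Next I would prove the statement for the canonical model: \emph{every component of $M^{Q'}$ contains a $Q^n$-fixed point}. Since $X$ is a homotopy polytope it is contractible, so $M^{4n}\cong_{Q^n}\mathcal M(\lambda)=Q^n\times X/\!\sim_\lambda$ by Corollary \ref{cor:topclas2}, and we identify the two. A class $[q,x]$ lies in $M^{Q'}$ precisely when $q^{-1}Q'q\le\hat\lambda(\tau(x))$. The key structural input is that $\tau\mapsto\hat\lambda(\tau)$ is monotone: if $\tau\subseteq\tau'$ in $K_X$ then $\hat\lambda(\tau)\subseteq\hat\lambda(\tau')$ as subgroups of $Q^n$. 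Indeed, $\lambda$ is a functor into $CONJ(Q^n)$, so $\lambda(\tau)$ is a subclass of $\lambda(\tau')$, and for the canonical $0$--$1$ representatives built in the Construction the subclass relation is realised by an honest inclusion — this is immediate from the explicit description of $\hat\lambda$ together with Proposition \ref{prop:subgroupsIsoQ^n} and the classification of conjugacy classes of standard subgroups. Now let $D$ be a component of $M^{Q'}$ and pick $[q,x]\in D$, so $q^{-1}Q'q\le\hat\lambda(\tau(x))$. Choose a path $\alpha:[0,1]\to\overline{F_{\tau(x)}}$ from $x$ to a vertex $v$ of $X$; this is possible because the closed face $\overline{F_{\tau(x)}}$ is a contractible (hence path-connected) nice manifold with corners and therefore contains a vertex of $X$. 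For every $t$, the smallest face through $\alpha(t)$ is a face of $\overline{F_{\tau(x)}}$, so $\tau(x)\subseteq\tau(\alpha(t))$ and hence $q^{-1}Q'q\le\hat\lambda(\tau(x))\subseteq\hat\lambda(\tau(\alpha(t)))$. Thus $t\mapsto[q,\alpha(t)]$ is a continuous path in $M^{Q'}$ from $[q,x]$ to $[q,v]$. Since $\hat\lambda(\tau(v))=\hat\lambda(\sigma_v)=Q^n$, the endpoint equals the $Q^n$-fixed point $[1,v]$, and it lies in $D$ because $D$ is a component of $M^{Q'}$.

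Finally I would transport this back to $N^{4n}$. Let $C$ be a component of $N^{Q'}$. Its image $f^{Q'}(C)$ is connected, hence contained in a single component $D$ of $M^{Q'}$; by the previous step pick a $Q^n$-fixed point $p\in D$. Then $g^{Q'}(p)=g^{Q^n}(p)$ is a $Q^n$-fixed point of $N^{4n}$. Moreover $g^{Q'}(D)$ is connected, lies in $N^{Q'}$, and meets $C$: it contains $g^{Q'}f^{Q'}(C)$, and since $g^{Q'}f^{Q'}\simeq\mathrm{id}_{N^{Q'}}$ each point $g^{Q'}f^{Q'}(c)$ is joined to $c$ by a path in $N^{Q'}$, so $g^{Q'}f^{Q'}(C)\subseteq C$. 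A connected subset of $N^{Q'}$ that meets the component $C$ is contained in $C$, so $g^{Q'}(D)\subseteq C$; in particular $g^{Q'}(p)\in C$, which is the required $Q^n$-fixed point. The main obstacle is the model statement of the third paragraph, and within it the monotonicity of $\tau\mapsto\hat\lambda(\tau)$ on canonical representatives and the fact that every closed face of a homotopy polytope is path-connected and contains a vertex; once these structural facts are recorded, the path-lift $t\mapsto[q,\alpha(t)]$ makes that step essentially formal, and the transport along the equivariant homotopy inverse is routine point-set topology.
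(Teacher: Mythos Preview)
Your proof is correct and follows essentially the same approach as the paper: identify $Q'$ with $Q_F$ for some face, use the equivariant homotopy equivalence on fixed sets to relate components of $N^{Q'}$ and $M^{Q'}$, invoke the fact that each component of $M^{Q'}$ contains a $Q^n$-fixed point, and transport back. The paper's proof states the last two steps in a single sentence each, while you spell them out in full --- proving the model statement via the explicit structure of $\mathcal{M}(\lambda)$ and the monotonicity of $\hat\lambda$, and carrying out the transport by hand with the equivariant homotopy inverse $g$ --- so your version is a detailed elaboration of the same argument rather than a different route.
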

\begin{proof}
Since the isotropy subgroups of $M^{4n}$ and $N^{4n}$ are the same, the subgroup $Q'$ is equal to $Q_F$ for some face $F$ of $X$. Then $N^{Q'} = N^{Q_F}$ and let $C$ be a component of $N^{Q_F}$. The map $f$ induces a homotopy equivalence $f^{Q_F}:  N^{Q_F} \to M^{Q_F}$. Then the restriction of $f^{Q_F}$ to $C$ induces a homotopy equivalence from $C$ to $C'$ where $C'$ is a component of $M^{Q_F}$. This implies that $C'$ has a $Q^n$-fixed point. Therefore $C$ has a $Q^n$-fixed point.
\end{proof}

\begin{proposition} \label{prop:loc_regular}
The $Q^n$-action on $N^{4n}$ is locally regular. Furthermore, $N^{4n}/Q^n=Y$ is a homotopy polytope.
\end{proposition}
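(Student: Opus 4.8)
The plan is to establish local regularity by analysing the $Q^n$-representation on the tangent space at each $Q^n$-fixed point of $N^{4n}$, and then use the slice theorem together with the already-developed machinery (Corollary \ref{cor:topclas2}) to identify $N^{4n}/Q^n$ with a homotopy polytope.

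First I would reduce to a neighbourhood of a $Q^n$-fixed point. By Corollary \ref{cor:nonempty_fixset}, every component of every fixed set $N^{Q'}$ contains a $Q^n$-fixed point, so it suffices to show that near each $Q^n$-fixed point $y$ the action looks like a regular $Q^n$-corner. Since the $Q^n$-action on $N^{4n}$ is locally linear (hypothesis), there is a linear tube at $y$, i.e. a $Q^n$-invariant neighbourhood equivariantly homeomorphic to the tangent representation $\rho = T_y N^{4n} \in Rep(Q^n,\RR)$, which has real dimension $4n$. By Corollary \ref{cor:sameISO} the isotropy subgroups occurring in $\rho$ are exactly the standard subgroups $Q_F$; in particular they are all standard in the sense of Definition \ref{def:standard_isotropies}. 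Moreover, $f$ being an equivariant homotopy equivalence identifies, via $f^{Q'}$, the local structure of the fixed sets of $N^{4n}$ with those of $M^{4n} \cong_{Q^n}\mathcal{M}(\lambda)$; since in $\mathcal{M}(\lambda)$ the maximal isotropy subgroups appearing around a fixed point are precisely the $n$ groups $Q^{n-1}_i$, each occurring uniquely, the same holds for $\rho$. This verifies the hypotheses of Theorem \ref{thm:isotropy gps}, which then yields a decomposition $\rho = \rho_1 \oplus \cdots \oplus \rho_n$ into $4$-dimensional irreducibles $\rho_i = \rho_{i_1}\otimes\cdots\otimes\rho_{i_n}$ with at most two non-trivial tensor factors. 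One checks that each such $\rho_i$ is, up to the choice of which one or two coordinates act, exactly a coordinate of the general action \eqref{eq:general_action} (left multiplication, or conjugation $s_l h s_r^{-1}$ when two factors are non-trivial), so $\rho$ is a regular $Q^n$-corner; hence the linear tube is a regular chart and $N^{4n}$ has a regular atlas around every $Q^n$-fixed point.

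To propagate regularity to all of $N^{4n}$, I would argue that a point $z$ with isotropy $Q_F$ lies in the closure of a stratum whose isotropy is a maximal proper standard subgroup, and inductively, in a neighbourhood of a $Q^n$-fixed point; more directly, the linear tube at $z$ has the form $Q^n\times_{Q_F} V$ with $V$ a $Q_F$-representation, and since $Q_F \cong Q^{n-|\sigma|}$ with $N^{Q_F}$ meeting a $Q^n$-fixed point, the slice representation is constrained by the fixed-point analysis above to be standard; thus the tube is a regular chart. Effectiveness of the action (already proved) gives that the resulting atlas is a regular atlas in the sense required, so the $Q^n$-action on $N^{4n}$ is locally regular. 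The quotient $Y = N^{4n}/Q^n$ is then a space locally modelled on $\RR^n_{\geq}$, i.e. an $n$-manifold with corners, and niceness follows because the isotropy functor is acceptable at each vertex (it agrees with that of $M^{4n}$ via the face-preserving equivalence induced by $f$).

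Finally, that $Y$ is a homotopy polytope follows from Remark (2) after Proposition \ref{prop:reverse_construction}: the equivariant homotopy equivalence $f:N^{4n}\to M^{4n}$ descends to a face-preserving homotopy equivalence $\phi: Y \to X$ (Remark (1) there), and since $X$ is a homotopy polytope, each face of $Y$ — being homotopy equivalent, through $\phi$ restricted to faces, to a contractible face of $X$ — is a contractible manifold with corners, as is $Y$ itself. The main obstacle I anticipate is the representation-theoretic step: matching the abstract decomposition from Theorem \ref{thm:isotropy gps} with an honest regular $Q^n$-corner requires a careful bookkeeping of which coordinate subgroups act on which $\rho_i$ and in which way (multiplication versus conjugation), to ensure the resulting isotropy functor $\ell:CAT([n])\to CONJ(Q^n)$ is injective on objects — this is exactly the point where non-commutativity of $Q^n$ makes the argument genuinely harder than the torus case of \cite{mp}.
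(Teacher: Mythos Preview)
Your proposal follows essentially the same strategy as the paper: analyse the tangent representation at a $Q^n$-fixed point via Theorem \ref{thm:isotropy gps}, conclude that the slice there is a regular corner, and then propagate to arbitrary points; the homotopy-polytope conclusion is also argued the same way, via the induced face-preserving homotopy equivalence $\phi:Y\to X$.

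The one place where your argument is genuinely looser than the paper's is the propagation step. You assert that a general point $z$ with isotropy $Q_F$ lies ``in a neighbourhood of a $Q^n$-fixed point'', but this need not hold; what Corollary \ref{cor:nonempty_fixset} gives you is only a fixed point somewhere in the same component of $N^{Q_F}$, possibly far from $z$. Your ``more direct'' claim that the $Q_F$-slice at $z$ is ``constrained by the fixed-point analysis'' has the same issue: Theorem \ref{thm:isotropy gps} is a statement about $Q^n$-representations at global fixed points, and nothing a priori ties the $Q_F$-slice at a non-fixed $z$ to one of those. The paper closes this gap by choosing a path in the fixed-point component $C\subset N^{Q_F}$ from the $Q^n$-fixed point to $y$, covering it by finitely many linear tubes, and observing that on each overlap the tangent representation at the intermediate point is the common restriction of the representations at the two tube centres; this chains the regular structure from the fixed point out to $y$. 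You should replace the ``neighbourhood of a fixed point'' heuristic with this path-and-tube argument.
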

\begin{proof}
The proof is as in \cite[Proposition 7.7]{mp}. Let $y\in N^{4n}$ with isotropy group $Q_y$. Corollary \ref{cor:sameISO} implies that $Q_y=Q_F$ for some face $F<X$. Let $C$ be that component of $N^{Q_F}$ that contains $y$. Corollary \ref{cor:nonempty_fixset} implies that there is a point $z\in C$ that is fixed by $Q^n$. Since the action is locally linear, there is a linear slice around each point. Since the isotropy groups are standard, the slices are well defined up to linear equivalence \cite{schultz}. Notice that the tubes are $Q^n$-manifolds. The slice $S$ is an effective linear representation of $Q^n$. If $y\in S$, then by Theorem \ref{thm:isotropy gps} the action of $Q^n$ on $S$ is modeled by the regular action on $\HH^n$. So $S$ is a regular neighborhood of $y$. In general, there is a path $\alpha$ that joins $z$ to $y$. We start with a finite open cover of the image of $\alpha$ with tubes. Then, there is a tube $\tau$ that contains $z$. The tangent space $\mathcal{T}_z$ is a $Q^n$-representation and therefore, it is a representation coming from a regular action. Let $y_0$ be a point on the path that lies in the intersection of $\tau$ and another tube at $\tau_0$ at $x_0$. Then the tangent space at $y_0$ is a $\mathcal{T}_{y_0}$-representation, and this is the restriction of the $Q$-representation $\mathcal{T}_z$ and the $Q_{x_0}$-representation $\mathcal{T}_{x_0}$. Continuing this way we get a sequence of points $y_i$, $i=0,1,\cdots ,k$ such that $y_k=y$ and the $Q_{y_i}$-representation $\mathcal{T}_{y_i}$ is the restriction of the $Q$-representation $\mathcal{T}_z$. That completes the proof of the first part.

The quotient $Y$ is a nice n-manifold with corners and there is a skeletal homotopy equivalence $\phi: Y\to X$. This implies that $Y$ is a homotopy polytope \cite[Proposition 3.16]{mp}. 
\end{proof}

As in \cite{mp}, \cite{ps}, we can show the following. 

\begin{lemma} \label{lemma:Poincare}
Let $X$ and $Y$ be homotopy polyhedra. Then any face-preserving homotopy equivalence ${\phi}: Y \to X$ is face-preserving homotopic to a face- preserving homeomorphism. Furthermore, if $(Y, {\lambda}')$ and $(X, {\lambda})$ are characteristic pairs, the induced map ${\phi}_*: N_Y({\lambda}') \to M_X({\lambda})$ is $Q^n$-homotopic to a $Q^n$-homeomorphsim.
\end{lemma}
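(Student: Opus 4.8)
The plan is to prove the first assertion by induction on $n=\dim X=\dim Y$ and, within a fixed dimension, by a secondary induction over the skeleta of $Y$, straightening $\phi$ to a homeomorphism one skeleton at a time, each step carried out rel the already-straightened lower skeleton. Two structural facts keep the induction going. First, for homotopy polytopes a face-preserving homotopy equivalence induces an isomorphism of face posets $K_Y\cong K_X$ and restricts, for each face $F$ of $Y$, to a homotopy equivalence $\phi|_F\colon F\to\phi(F)$ onto a face of $X$ of the same dimension, and likewise on boundaries; this is part of the face machinery of \cite{mp} (and \cite{ps}), and I would invoke it as such. Second, the homotopy polytope hypothesis is hereditary: every face of $Y$ and its boundary is again contractible, and van Kampen applied to the decomposition of a face $F$ into its contractible facets glued along their contractible subfaces shows that $\partial F$ is simply connected. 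The base case $n=0$ is trivial.

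For the inductive step I would assume the full statement in all dimensions $<n$ and suppose that, after a face-preserving homotopy, $\phi$ already restricts to a homeomorphism $Y^{(k-1)}\xrightarrow{\cong}X^{(k-1)}$; then I fix a $k$-face $F$ of $Y$ and set $G=\phi(F)$. The core task is to homotope $\phi|_F\colon F\to G$, rel $\partial F$, to a homeomorphism, where $F,G$ are contractible compact $k$-manifolds with corners, $\partial F$ is simply connected, and $\phi|_F$ restricts to a homeomorphism $\partial F\xrightarrow{\cong}\partial G$. The argument: a simply connected homology $(k-1)$-sphere is a homotopy sphere, hence homeomorphic to $S^{k-1}$ by the topological \Poincare conjecture (Freedman in dimension $4$), so $F$ — a compact contractible topological manifold with boundary $S^{k-1}$ — is homeomorphic to $D^k$ (classical for $k\le 3$, Freedman for $k=4$, the $h$-cobordism theorem for $k\ge 5$), and likewise $G\cong D^k$. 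Consequently the boundary homeomorphism $\partial F\xrightarrow{\cong}\partial G$ cones to a homeomorphism $c\colon F\to G$, and, since $G$ is contractible, the space of maps $F\to G$ extending $\phi|_{\partial F}$ is contractible, so $c$ is homotopic to $\phi|_F$ rel $\partial F$. Running this over all $k$-faces at once is consistent, since the homotopies are rel the common lower skeleton and stay within each face, hence are face-preserving; this advances the secondary induction from $X^{(k-1)}$ to $X^{(k)}$, and $k=n$ completes the dimension-$n$ case.

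The hard part will be exactly this straightening step together with the bookkeeping that assembles the finitely many partial homotopies into a single face-preserving homotopy $\phi\simeq\psi$; the substantive geometric content is the topological \Poincare conjecture in all dimensions (notably Freedman's theorem in dimension $4$) and rigidity of the disk, while the absence of higher obstructions is formal because everything in sight is simply connected, so $Wh(1)$ and the relevant surgery obstructions vanish. A more explicitly surgery-theoretic phrasing of the same step — doing relative topological surgery on $\phi|_F$ rel $\partial F$, using $F/\partial F\simeq\Sigma(\partial F)$, triviality of $Wh(1)$, and the fact that the surgery obstruction map $[\,F/\partial F,\,G/\mathrm{TOP}\,]\to L_k(\ZZ)$ is injective with $L_{k+1}(\ZZ)$ acting trivially on the rel-boundary structure set — would do equally well, and I would keep whichever is cleaner in the final write-up.

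For the ``furthermore'' clause, I would let $\psi\colon Y\to X$ be the face-preserving homeomorphism produced above and $\{\phi_t\}$ a face-preserving homotopy with $\phi_0=\phi$ and $\phi_1=\psi$. Each $\phi_t$ is a face-preserving homotopy equivalence, so by \cite[Proposition 3.10]{mp} it induces a $Q^n$-equivariant map $\phi_{t\,*}\colon\mathcal{M}(\lambda')\to\mathcal{M}(\lambda)$, and $t\mapsto\phi_{t\,*}$ is a $Q^n$-homotopy from $\phi_*=\phi_{0\,*}$ to $\phi_{1\,*}$. Since the homotopy is face-preserving, $\phi_t(F')=\phi(F')=\psi(F')$ for every facet $F'$ of $Y$ and all $t$; combined with $Q_{F'}=Q_{\phi(F')}$ from Proposition \ref{prop:reverse_construction}(2), this forces $Q_{\psi(F')}=Q_{F'}$, so $\psi$ respects the characteristic data and its induced map $\psi_*=\phi_{1\,*}$, $[q,y]\mapsto[q,\psi(y)]$, is a $Q^n$-homeomorphism with inverse induced by $\psi^{-1}$. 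Hence the induced map $\phi_*\colon N_Y(\lambda')\to M_X(\lambda)$ is $Q^n$-homotopic to the $Q^n$-homeomorphism $\psi_*$. (Alternatively, since $X$ and $Y$ are contractible, one may first identify $N_Y(\lambda')$ and $M_X(\lambda)$ with the original quoric manifolds via Corollary \ref{cor:topclas2} and transport $\psi_*$ through those identifications.)
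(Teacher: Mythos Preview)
Your proposal is correct and takes essentially the same approach as the paper. The paper's own proof is a two-line sketch --- ``The proof is done by induction. For the inductive step, we use the surgery exact sequence and the \Poincare Conjecture. For details see \cite{mp}, \cite{ps}. The second part is immediate from the constructions.'' --- and your skeleton-by-skeleton straightening via the topological \Poincare conjecture and disk rigidity (with the surgery-theoretic variant you also sketch) is exactly what one finds upon unpacking those references, while your treatment of the ``furthermore'' clause spells out what the paper calls ``immediate from the constructions.''
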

\begin{proof}
The proof is done by induction. For the inductive step, we use the surgery exact sequence and the \Poincare Conjecture. For details see \cite{mp}, \cite{ps}.

The second part is immediate from the constructions.
\end{proof}

\begin{theorem}[Rigidity of Quoric Manifolds] Let $M^{4n}$ be a locally linear quoric manifold over a nice $n$-manifold with corners $X$ and characteristic map $\lambda$. 
We assume that $X$ is a homotopy polytope and all the faces of $X$ (and $X$ itself) are contractible manifolds with corners. Let $f: N^{4n} \to M^{4n}$ be a $Q^n$-homotopy equivalence, where $N^{4n}$ is a locally linear manifold. Then $f$ is $Q^n$-homotopic to a $Q^n$-homeomorphism.
\end{theorem}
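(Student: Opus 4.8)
The plan is to run $f$ through the canonical-model machinery of Sections~3 and~4, reducing the theorem to the corner-theoretic statement of Lemma~\ref{lemma:Poincare}; I would organize the argument in three stages plus assembly. \emph{Stage 1 (transporting structure to $N$).} Since $f$ is a $Q^n$-homotopy equivalence, the fixed-set comparisons proving Corollary~\ref{cor:sameISO} and Corollary~\ref{cor:nonempty_fixset} apply without change: $N^{4n}$ and $M^{4n}$ have exactly the same isotropy subgroups, all of the standard form $Q_F$, and every component of every fixed set $N^{Q'}$ contains a $Q^n$-fixed point. Feeding this into the representation-theoretic input of Theorem~\ref{thm:isotropy gps} via Proposition~\ref{prop:loc_regular}, the $Q^n$-action on $N^{4n}$ is locally regular; hence $N^{4n}$ is a quoric manifold over $Y := N^{4n}/Q^n$, a nice $n$-manifold with corners, and the skeletal homotopy equivalence induced by $f$ forces $Y$ to be a homotopy polytope. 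The point requiring care here is that the hypotheses of Theorem~\ref{thm:isotropy gps} — standard isotropy together with a \emph{unique} maximal isotropy subgroup conjugate to each $Q^{n-1}_i$ — must be checked for the linear slices of the $Q^n$-action on $N$ at its $Q^n$-fixed points; this follows because, by Corollary~\ref{cor:sameISO} and local linearity, those slices are linearly equivalent to the corresponding slices in $M$, which inherit the property from the quoric structure of $M$.

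\emph{Stage 2 (passing to canonical models).} Let $\lambda$ and $\lambda'$ be the characteristic functors of $M^{4n}$ and $N^{4n}$ over $X$ and $Y$. Since $X$ and $Y$ are contractible, Corollary~\ref{cor:topclas2} supplies $Q^n$-homeomorphisms $h_M : M^{4n} \to \mathcal{M}(\lambda)$ and $h_N : N^{4n} \to \mathcal{M}(\lambda')$. Then $\tilde{f} := h_M \circ f \circ h_N^{-1} : \mathcal{M}(\lambda') \to \mathcal{M}(\lambda)$ is an equivariant homotopy equivalence of canonical models over homotopy polytopes, so Proposition~\ref{prop:reverse_construction} applies: the map $\phi : Y \to X$ induced on the quotients is a face-preserving homotopy equivalence, $Q_{F'} = Q_{\phi(F')}$ for each facet $F'$ of $Y$, and $\tilde{f} \simeq_{Q^n} \phi_*$.

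\emph{Stage 3 (straightening and assembly).} By Lemma~\ref{lemma:Poincare}, $\phi$ is face-preserving homotopic to a face-preserving homeomorphism $\psi : Y \to X$, and the induced equivariant maps satisfy $\phi_* \simeq_{Q^n} \psi_*$ with $\psi_* : \mathcal{M}(\lambda') \to \mathcal{M}(\lambda)$ a $Q^n$-homeomorphism. Concatenating the homotopies of Stages~2 and~3 gives $\tilde{f} \simeq_{Q^n} \psi_*$, and transporting back through $h_M$ and $h_N$ yields $f = h_M^{-1} \circ \tilde{f} \circ h_N \simeq_{Q^n} h_M^{-1} \circ \psi_* \circ h_N =: g$. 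Since $g$ is a composite of $Q^n$-homeomorphisms it is a $Q^n$-homeomorphism, and it is $Q^n$-homotopic to $f$, which is the assertion.

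\emph{Main obstacle.} The genuine work is not in the assembly but in the two imported ingredients. The harder one is Stage~1, i.e. Proposition~\ref{prop:loc_regular} and the underlying Theorem~\ref{thm:isotropy gps}: passing from local linearity to local regularity requires the non-commutative representation analysis of Section~5 — decomposing a $4n$-dimensional real $Q^n$-representation with standard isotropy into $4$-dimensional irreducibles and controlling how the conjugacy classes of isotropy subgroups sit inside $Q^n$ — which is precisely where the quaternionic case departs from the torus case of \cite{mp}. The other nontrivial step lives inside Lemma~\ref{lemma:Poincare}: the induction up the skeleton of $X$, straightening $\phi$ face by face using the surgery exact sequence and the \Poincare Conjecture, as in \cite{mp} and \cite{ps}.
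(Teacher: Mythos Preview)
Your proposal is correct and follows essentially the same route as the paper's proof: invoke Proposition~\ref{prop:loc_regular} to make $N$ locally regular over a homotopy polytope $Y$, use Corollary~\ref{cor:topclas2} to replace both sides by their canonical models, apply Proposition~\ref{prop:reverse_construction} to identify $\tilde f$ with $\phi_*$, and finish with Lemma~\ref{lemma:Poincare}. One minor remark on your Stage~1 commentary: the claim that the linear slices of $N$ at its $Q^n$-fixed points are ``linearly equivalent to the corresponding slices in $M$'' is stronger than what Corollary~\ref{cor:sameISO} and local linearity alone give you, and is not how the paper (inside Proposition~\ref{prop:loc_regular}) verifies the hypotheses of Theorem~\ref{thm:isotropy gps}; but since you are ultimately citing Proposition~\ref{prop:loc_regular} rather than reproving it, this does not affect the argument.
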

\begin{proof}
By Proposition \ref{prop:loc_regular}, the action on $N^{4n}$ is locally regular with quotient $Y$. 
%By the homotopy invariance of the Euler class 
%(Theorem \ref{thm:HomInvarCech}), $e(N) = e(M) = 0$. 
Thus, by Corollary \ref{cor:topclas2}, $N \cong_{Q^n} N_Y({\lambda}')$, for some characteristic map ${\lambda}'$. It enough to show that the map
$$\bar{f}: N_Y({\lambda}') \xrightarrow{\cong} N \xrightarrow{f} M \xrightarrow{\cong} M_X({\lambda})$$
is $Q^n$-homotopic to a $Q^n$-homeomorphism. Notice that the map $\bar{f}$ induces a face-preserving homotopy equivalence 
$\phi : Y \to X$. From Proposition \ref{prop:reverse_construction}, $\bar{f} \simeq_{Q^n} {\phi}_*$. From Lemma \ref{lemma:Poincare}, $\phi$ is face-preserving homotopic to a face-preserving homeomorphism and thus ${\phi}_*$ is $Q^n$-homotopic to a $Q^n$-homeomorphism, completing the proof.
\end{proof}

\subsection*{Acknowledgements}
The authors want to thank Michael Davis for suggesting the problem and helping with useful discussions and suggestions. They also would like to thank 
Tadeusz Januszkiewicz whose comments considerably improve the presentation of the paper. 
The first author  would like to thank Vassilis Metaftsis, Ioannis Emmanouil and Takis Hatzinikitas for their feedback on the first author's thesis at the University of the Aegean, on which the paper is based.

%%%%%%%%%%% To ease editing, use normal size for the references:

\normalsize

\end{document}